\renewcommand*{\HyperDestNameFilter}[1]{\jobname-#1} 
\numberwithin{equation}{section}
\newcommand{\noi}{\noindent}
 \theoremstyle{plain}
\newtheorem{theor}{Theorem}[section]
\newtheorem{prop}[theor]{Proposition}
\newtheorem{lem}[theor]{Lemma}
\newtheorem{cor}[theor]{Corollary}
\theoremstyle{remark}
\newtheorem{rem}[theor]{Remark}
\theoremstyle{plain}
\newtheorem{defi}[theor]{Definition}
\numberwithin{equation}{section}
\newcommand{\pos}{\textnormal{pos}}
\newcommand{\CC}{{\mathbb C}}
\newcommand{\RR}{{\mathbb R}}
\newcommand{\QQ}{{\mathbb Q}}
\newcommand{\ZZ}{{\mathbb Z}}
\newcommand{\VV}{{\mathbb V}}
\newcommand{\WW}{{\mathbb W}}
\newcommand{\G}{{\mathbf G}}
\newcommand{\HH}{{\mathbf H}}
\newcommand{\NN}{{\mathbb N}}
\newcommand{\Ga}{\Gamma}
\newcommand{\HL}{\textnormal{HL}}
\newcommand{\ti}[1]{\mbox{$\tilde{#1} $}}
\newcommand{\ol}{\overline}
\newcommand{\lo}{\longrightarrow}
\newcommand{\Hom}{{\rm Hom}}
\newcommand{\Sh}{{\rm Sh}}
\newcommand{\ad}{{\rm ad}}
\newcommand{\der}{{\rm der}}
\newcommand{\GL}{{\rm \bf GL}}
\newcommand{\MT}{{\rm \bf MT}}
\newcommand{\alg}{\textnormal{alg}}
\newcommand{\proj}{{\mathbb P}}
\newcommand{\Aut}{\textnormal{Aut}}
\newcommand{\an}{\textnormal{an}}
\newcommand{\bH}{{\mathbf H}}
\newcommand{\NL}{\textnormal{NL}}
\newcommand{\cA}{{\mathcal A}}
\newcommand{\cD}{{\mathcal D}}
\newcommand{\cV}{{\mathcal V}}
\newcommand{\cO}{{\mathcal O}}
\newcommand{\cU}{{\mathcal U}}
\newcommand{\cY}{{\mathcal Y}}
\newcommand{\Zar}{\textnormal{Zar}}
\newcommand{\red}{\textnormal{red}}
\newcommand{\Hod}{\textnormal{Hod}}
\newcommand{\Hdg}{\textnormal{Hdg}}
\newcommand{\ws}{\textnormal{ws}}
\newcommand{\norm}{\textnormal{nor}}
\newcommand{\nc}{\textnormal{nt}}
\begin{document}
\title{On the closure of the positive dimensional Hodge locus}
\author{B. Klingler and A. Otwinowska}
\thanks{B.K.'s research is supported by an Einstein Foundation's
  professorship}

\begin{abstract}
Given $\VV$ a polarizable variation of $\ZZ$-Hodge structures on a
smooth connected complex
quasi-projective variety $S$,  the Hodge locus for $\VV^\otimes$ is
the set of closed points $s$ of $S$ where the fiber $\VV_s$ has more Hodge tensors than the
very general one. A classical result of Cattani, Deligne and
Kaplan states that the Hodge locus for $\VV^\otimes$ is a countable union of closed irreducible algebraic
subvarieties of $S$, called the special subvarieties of $S$ for
$\VV$.

Under the assumption that the adjoint group of the
generic Mumford-Tate group of $\VV$ is simple we prove that the union of the special subvarieties for $\VV$
whose image under the period map is not a point is either a closed algebraic
subvariety of $S$ or is Zariski-dense in $S$.

This implies for instance the following typical intersection
statement: given a Hodge-generic closed irreducible algebraic subvariety $S$ of the 
moduli space $\cA_g$ of principally polarized Abelian varieties of 
dimension $g$, the union of the positive dimensional
irreducible components of the intersection of $S$ with the strict special subvarieties of
$\cA_g$ is either a closed algebraic subvariety of $S$ or is Zariski-dense in $S$.
\end{abstract}

\maketitle

\section{Introduction} \label{intro}

\subsection{Motivation: Hodge loci}
Let $(\VV_\ZZ, \cV, F^\bullet, \nabla)$ be a polarizable variation of $\ZZ$-Hodge
structure ($\ZZ$VHS) of arbitrary weight on a smooth
connected complex quasi-projective
variety $S$. Thus $\VV_\ZZ$ is a finite rank locally free $\ZZ_{S^\an}$-local system on
the complex manifold $S^\an$ associated to $S$; and $(\cV, F^\bullet,
\nabla)$ is the unique algebraic regular filtered flat connection on
$S$ whose analytification is $\VV \otimes_{\ZZ_{S^\an}} \cO_{S^{\an}}$ endowed
with its Hodge filtration $F^\bullet$ and the holomorphic flat connection
$\nabla^\an$ defined by $\VV$, see
\cite[(4.13)]{Schmid}). From now on we will abbreviate the $\ZZ$VHS
$(\VV_\ZZ, \cV, F^\bullet, \nabla)$ simply by $\VV$.

A typical example of such a $\ZZ$VHS, referred to as ``the
geometric case'', is the weight zero $\ZZ$VHS $(\VV_\ZZ:=R^{2k}f^\an_* \,
\ZZ(k)/ (\textnormal{torsion}), \cV:= 
R^{2k}f_* \Omega^\bullet_{X/S}, F^\bullet, \nabla)$ associated to a
smooth projective morphism of smooth irreducible  complex
quasi-projective varieties  $f:X \to S$. In this case the Hodge filtration $F^\bullet$ is
induced by the stupid filtration on the algebraic De Rham complex
$\Omega^\bullet_{X/S}$ and $\nabla$ is the Gau\ss\--Manin
connection.

The Hodge locus $\HL(S, \VV)$ is the set
of points $s \in S^\an$ for which the Hodge structure $\VV_{s}$
admits more Hodge classes than the very general fiber $\VV_{s'}$
(for us a Hodge class in a pure $\ZZ$-Hodge 
structure $H=(H_\ZZ, F^\bullet)$ is a class in $H_\ZZ$
whose image in $H_\CC$ lies in $F^0 H_\CC$, or equivalently a morphism of Hodge 
structures $\ZZ(0) \to H$). It is empty if $\VV$ contains no
non-trivial weight zero factor. More generally let $\VV^\otimes$ be
the countable direct sum of polarizable $\ZZ$VHSs $\bigoplus_{a, b \in
  \NN} {\VV}^{\otimes a} \otimes (\VV^\vee)^{\otimes b}$ (where $\VV^\vee$ denotes the $\ZZ$VHS dual of $\VV$).
The Hodge locus $\HL(S, \VV^\otimes)$ is the subset of points $s \in 
S^\an$ for which the Hodge structure $\VV_s$ admits more Hodge {\em tensors}
than the very general fiber $\VV_{s'}$. It contains 
$\HL(S, \VV)$, usually strictly.

In the geometric case  Weil \cite{Weil}
asked whether $\HL(S, \VV)$ is a countable union of closed
algebraic subvarieties of $S$ (he noticed that a positive
answer follows easily from the rational Hodge conjecture).
In \cite{CDK95} Cattani, Deligne and
Kaplan proved the following unconditional
celebrated result (we also refer to \cite{BKT} for an alternative
proof): 

\begin{theor}(Cattani-Deligne-Kaplan) \label{CDK}
Let $S$ be a smooth connected complex quasi-projective algebraic variety and
$\VV$ be a polarizable $\ZZ$VHS over $S$. 
Then $\HL(S, \VV)$ (thus also $\HL(S, \VV^\otimes)$) is a countable union of
closed irreducible algebraic subvarieties of $S$.
\end{theor}

The locus $\HL(S, \VV^\otimes)$ is easier to understand than $\HL(S,
\VV)$ as it has a group-theoretical interpretation. Recall that the
Mumford-Tate group $\MT(H) \subset \GL(H)$ of a $\QQ$-Hodge structure 
$H$ is the Tannakian group of the Tannakian category $\langle
H^\otimes\rangle$ of $\QQ$-Hodge structures
tensorially generated by $H$ and its dual 
$H^\vee$. Equivalently, the group $\MT(H)$ is the fixator in $\GL(H)$
of the Hodge tensors for $H$. Given a polarized $\ZZ$VHS $\VV$ on
$S$ as above and $Y \hookrightarrow S$ a closed irreducible algebraic subvariety, a
point $s$ of $Y^\an$ is said to be Hodge-generic
in $Y$ for $\VV$ if $\MT(\VV_{s, \QQ})$ has 
maximal dimension when $s$ ranges through $Y^\an$. Two Hodge-generic points in $Y^\an$ for $\VV$ have
the same Mumford-Tate group, called the generic Mumford-Tate group
$\MT(Y, \VV_{|Y})$ of $Y$ for $\VV$. The Hodge locus $\HL(S, \VV^\otimes)$ is
also the subset of points of $S$ which are not Hodge-generic in $S$
for $\VV$.

\begin{defi} \label{special}
A {\em special subvariety} of $S$ for $\VV$ is a closed irreducible
algebraic subvariety $Y \subset S$ maximal among the closed
irreducible algebraic subvarieties $Z$ of $S$ such that $\MT(Z,
\VV_{|Z}) = \MT(Y, \VV_{|Y})$.
\end{defi}

In particular $S$ is always special for $\VV$.
\Cref{CDK} for $\HL(S, \VV^\otimes)$ can be rephrased by saying that
the set of special subvarieties of $S$ for $\VV$ is countable
and that $\HL(S, \VV^\otimes)$ is the (countable) union of the strict special subvarieties of $S$ for
$\VV$.

\subsection{Main result}

In this paper we investigate the geometry of the
Zariski-closure of the Hodge locus $\HL(S, \VV^\otimes)$. Our methods
are variational, hence we only detect the special subvarieties
of $S$ for $\VV$ which are positive dimensional in the following sense:

\begin{defi}
A closed irreducible subvariety $Y$ of $S$ is said to be {\em
  positive dimensional for $\VV$} if the local system ${\VV}_{|Y}$ is not
isotrivial.
\end{defi}

\noindent
Equivalently, $Y$ is positive dimensional for $\VV$ if and
only if its algebraic monodromy group $\HH_Y$ for $\VV$ (see
\Cref{monodr}) is not equal to $\{1\}$; or equivalently if the period map $\Phi_S: S^\an
\to \Gamma \backslash \cD^+$ describing 
$\VV^\otimes$ (see \Cref{ws}) does not
contract $Y^\an$ to a point in the connected Hodge variety
$\Gamma \backslash \cD^+$. When $\VV$ satisfies the infinitesimal Torelli
condition (i.e. the period map $\Phi_S$ is an immersion), 
a closed irreducible subvariety $Y$ of $S$ is positive dimensional for
$\VV$ if and only if it is
positive dimensional. 

\begin{defi}
We define the positive dimensional Hodge locus $\HL(S, \VV^\otimes)_\pos \subset
\HL(S, \VV^\otimes)$ as the
union of the strict special subvarieties of $S$ for $\VV$
which are positive dimensional for $\VV$.
\end{defi}

Our main result describes the Zariski-closure of the positive Hodge
locus $\HL(S, \VV^\otimes)_\pos$:

\begin{theor} \label{main}
Let $\VV$ be a polarizable $\ZZ$VHS on a smooth connected complex quasi-projective variety
$S$. 
Suppose that the adjoint group of the generic Mumford-Tate group $\MT(S,
\VV)$ is simple (we will say that $\MT(S,
\VV)$ is non-product).
Then either $\HL(S, \VV^{\otimes})_\pos$ is a finite union of strict special
subvarieties of $S$; or it is Zariski-dense in $S$.
\end{theor}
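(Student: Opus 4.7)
The plan is to study the Zariski closure $Z := \overline{\HL(S,\VV^\otimes)_\pos}^{\Zar}$ inside $S$. As an algebraic subvariety, $Z$ has finitely many irreducible components $W_1,\dots,W_r$. The target dichotomy becomes: either $Z=S$, or each $W_i$ is itself a strict positive special subvariety. First I record that every $W_i$ is positive: any positive special $Y\subset W_i$ has non-constant $\VV_{|Y}$, so $\VV_{|W_i}$ is non-constant as well. Now fix a component $W:=W_i$ and split on its generic Mumford-Tate group.

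\emph{Strict case: $\MT(W,\VV)\subsetneq \MT(S,\VV)$.} I would take $W'$ to be the special closure of $W$ in $S$, i.e.\ the unique irreducible algebraic subvariety through $W$ realizing $\MT(W',\VV)=\MT(W,\VV)$, whose existence follows from \Cref{CDK} applied to the Hodge tensors cutting out $\MT(W,\VV)$. Positivity of $W$ forces $W'$ to be positive as well, so $W'\subset \HL(S,\VV^\otimes)_\pos\subset Z$. Since $W'$ is irreducible and $W$ is an irreducible component of $Z$ with $W\subset W'\subset Z$, we get $W=W'$, so $W$ is a strict positive special subvariety and this component is of the required form.

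\emph{Hodge-generic case: $\MT(W,\VV)=\MT(S,\VV)$.} The plan is to show this forces $W=S$, hence $Z=S$. Assume by contradiction $W\subsetneq S$. By André's theorem the algebraic monodromy group $H_W$ of $\VV_{|W^{\sm}}$ is a normal $\QQ$-subgroup of $\MT(S,\VV)^{\der}$. Positivity of $W$ rules out $H_W=\{1\}$, and since $\MT(S,\VV)^{\ad}$ is $\QQ$-simple by hypothesis, $H_W$ must surject onto $\MT(S,\VV)^{\ad}$. But $W$ contains a Zariski-dense family $\{Y_\alpha\}$ of strict positive special subvarieties, each with $\MT(Y_\alpha,\VV)\subsetneq \MT(S,\VV)$, and each image $\Phi_S(Y_\alpha)$ is confined to a proper Mumford-Tate subdomain $\cD_{\MT(Y_\alpha)}\subsetneq \cD^+$. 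The goal is to promote this Zariski-dense collection of atypical confinements of $\Phi_S$ into a single confinement of $\Phi_S(W)$ inside one proper Mumford-Tate subdomain $\cD_M$; together with Hodge-genericity of $W$, this would give $\MT(W,\VV)\subset M\subsetneq \MT(S,\VV)$, a contradiction.

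\emph{Main obstacle.} The rigidity step in the Hodge-generic case is the heart of the argument. Converting countably many atypical confinements of $\Phi_S$ along distinct subvarieties $Y_\alpha$ into a single confinement of $\Phi_S(W)$ is non-trivial because the Mumford-Tate subdomains $\cD_{\MT(Y_\alpha)}$ genuinely move with $\alpha$. I expect the proof to combine a rigidity principle for period maps in the spirit of Ax-Schanuel with a careful analysis of the distribution $F^0\cV\subset \cV$ and its interaction with the monodromy. The non-product hypothesis must enter in an essential way: when $\MT(S,\VV)^{\ad}$ decomposes as a product of simple factors, ``anti-diagonal'' Hodge-generic proper subvarieties containing Zariski-dense families of strict positive special subvarieties can be produced (from the factor-by-factor constancy of the VHS), showing that the dichotomy genuinely fails without the non-product assumption.
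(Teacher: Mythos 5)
Your decomposition of the Zariski closure $Z := \overline{\HL(S,\VV^\otimes)_\pos}^{\Zar}$ into irreducible components, and the handling of a component $W$ with $\MT(W,\VV) \subsetneq \MT(S,\VV)$ via the special closure $\langle W \rangle_{\textnormal{s}}$, is sound and close in spirit to what the paper does at the end of its argument. But the ``Hodge-generic case'' is a genuine gap, and you acknowledge as much: you cannot promote a Zariski-dense family of confinements $\Phi_S(Y_\alpha) \subset \cD_{\MT(Y_\alpha)}$, where the subdomains $\cD_{\MT(Y_\alpha)}$ move, into a single confinement of $\Phi_S(W)$. This is exactly the step where the paper's real technical input lies, and it does not take the route you sketch.

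The paper does not run a case split on whether an irreducible component of $Z$ is Hodge-generic. It instead produces, in a Zariski-dense open subset $U \subset Z$, a \emph{weakly special} subvariety of controlled dimension through every point. The engine for this is a triad of results you never invoke: the global algebraicity of the loci $\VV^i_{\geq d}$ inside the total space $\cV$ (\Cref{A}/\Cref{Aplus}, an algebraicity theorem for flat connections proved by a Noetherian descending-chain argument on the horizontal tangent distribution), a saturation statement (\Cref{saturation}/\Cref{sat}/\Cref{satenbas}) relating the Zariski closure of the rational locus $\VV^i_{\geq d} \cap \VV_\QQ$ to the full complex locus $\VV^i_{\geq d}$, and the Ax--Lindemann theorem of Bakker--Tsimerman (\Cref{Ax-Lindemann}) to conclude via \Cref{closure} that the Zariski closure of every component of $S^i(\lambda)$ is weakly special. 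The crucial conceptual move, which your proposal misses, is to pass from the \emph{integral} locus of Hodge classes (where you insist on staying with your $Y_\alpha$'s) to the \emph{complex} $F^i$-type locus $\VV^i_{\geq d}$: the latter is algebraic and comes with a well-behaved Zariski-dense family of weakly specials that one then intersects back with the $\QQ$-structure. The dichotomy is then resolved per weakly special $W_x$: if some $W_x = S$, then $Z = S$; otherwise all $W_x$ are strict, and only at this point does the non-product hypothesis enter, to guarantee that each strict positive weakly special $W_x$ sits inside a unique strict positive \emph{special} subvariety $S_x \subset \HL(S,\VV^\otimes)_\pos$, forcing $Z = \HL(S,\VV^\otimes)_\pos$ to already be algebraic. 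So the simplicity of $\MT(S,\VV)^{\ad}$ is used to convert weakly special to special, not in a monodromy-surjectivity/rigidity contradiction on a putative Hodge-generic component. Your intuition that ``Ax--Schanuel-type rigidity'' should be relevant is in the right direction (Ax--Lindemann is a consequence of Ax--Schanuel and does appear), but without the $F^i$-locus algebraicity and saturation machinery the Hodge-generic case remains open in your proposal.
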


In other words: either the set of strict special 
subvarieties of $S$ for $\VV$ which are positive dimensional
for $\VV$ has finitely many maximal elements (for
the inclusion); or the union of such special subvarieties is Zariski-dense in $S$.

\subsection{Examples}

\Cref{main} is new even in the much-studied case where the $\ZZ$VHS $\VV$
has weight $1$ or $2$. Let us warn the reader that these cases, which
are simpler to describe, are not representative: in higher weight we expect $\HL(S,
\VV^{\otimes})_\pos$ to be algebraic in general.

\subsubsection{Example 1: subvarieties of Shimura varieties} \label{ex1}

  Let $\Sh_K^0(\G, X)$ be a connected Shimura variety associated to a
  Shimura datum $(\G, X)$, with $\G$ non-product, and a level $K$
  chosen to be neat (we refer to \cite{Milne} for a nice survey on
  Shimura varieties). For $(\G, X) =
  (\mathbf{GSp}(2g), \HH_{g})$, $g\geq 1$, the Shimura variety $\Sh_K^0(\G,
  X)$ is the moduli space $\cA_g$ of principally
  polarized Abelian varieties of dimension $g$ (endowed with some
  additional level structure). Let $\VV$ be
  the $\ZZ$VHS on $\Sh_K^0(\G, X)$ associated to a faithful 
rational representation of $\G$ (see \cite[3.2]{EY03}). The Hodge locus
$\HL(\Sh_K^0(\G, X)):= \HL(\Sh_K^0(\G, X), \VV^\otimes)$ is well-known
to be independent of the choice of the faithful representation and
is completely described in terms of Shimura subdata of $(\G, X)$, see
\cite{Moo}. The special points of $\Sh_K^0(\G, X)$, i.e. the special
subvarieties of dimension zero, are the
CM-points, i.e. the points of $\Sh_K^0(\G, X)$ whose Mumford-Tate
group is commutative. In the case of $\cA_g$ the CM-points correspond
to abelian varieties with complex multiplication. Any connected
Shimura variety contains an analytically dense set of special 
points (see \cite[Lemma 3.3 and 3.5]{Milne}), in particular 
$\HL(\Sh_K^0(\G, X))$ is analytically dense in $\Sh_K^0(\G, X)$. The
same proof shows that $\HL(\Sh^0_K(\G, X))_\pos$ is analytically dense in
$\Sh^0_K(\G, X)$ as soon as it is not empty. For instance
$\HL(\cA_g)_\pos$ is analytically dense in $\cA_g$.

\begin{rem}
There exist Shimura varieties with empty
positive dimensional Hodge locus, for instance the Kottwitz unitary Shimura varieties (see
\cite{Clozel})  obtained by taking for $\G$ the group of invertible elements of a division algebra
of prime degree endowed with an involution of the second kind. Ball
quotients of Kottwitz type are the simplest examples.
\end{rem}

 If $S\subset \Sh^0_K(\G, X)$ is a closed irreducible subvariety the
 special subvarieties of $S$ for $\VV_{|S}$ are precisely the
 irreducible components of the intersection of $S$ with the special
 subvarieties of $\Sh^0_K(\G, X)$. \Cref{main} thus implies immediately:

 \begin{cor} \label{cor1}
Let $\Sh_K^0(\G, X)$ be a smooth connected Shimura variety associated to a
Shimura datum $(\G, X)$ with $\G$ non-product.
Let $S\subset \Sh^0_K(\G, X)$ be a closed irreducible
subvariety which is Hodge generic (i.e. $\MT(S, \VV_{|S})=
\G)$). 
Either the positive dimensional irreducible components of the intersection of
$S$ with the strict special subvarieties of $\Sh_K^0(\G, X)$ form a set with
finitely many maximal elements (for the inclusion), or their union is Zariski-dense in
$S$.
\end{cor}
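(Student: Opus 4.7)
My plan is to deduce \Cref{cor1} directly from \Cref{main} applied to the pair $(S, \VV_{|S})$, combined with two dictionary entries specific to the Shimura-variety setting. By hypothesis $\MT(S, \VV_{|S})$ is non-product, so \Cref{main} applies and yields the dichotomy: $\HL(S, \VV_{|S}^{\otimes})_\pos$ is either a finite union of strict positive special subvarieties of $S$ for $\VV_{|S}$, or Zariski-dense in $S$. It then suffices to match the sets in this dichotomy with the sets appearing in \Cref{cor1}.

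For the first dictionary entry, I would invoke the fact recalled in the paper just before \Cref{cor1}: the special subvarieties of $S$ for $\VV_{|S}$ are precisely the irreducible components of intersections $S\cap Y$ with $Y$ a special subvariety of $\Sh^0_K(\G, X)$. The strict ones among these correspond to strict $Y\subsetneq \Sh^0_K(\G, X)$, since the only case $Y=\Sh^0_K(\G, X)$ contributes only $S$ itself, which is non-strict in $S$. This step is essentially a tautology once the above fact is granted.

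For the second dictionary entry, I would verify that ``positive'' coincides with ``positive-dimensional'' for irreducible subvarieties of $S$. Since $\VV$ comes from a faithful representation of $\G$, on the universal cover the period map of the Shimura VHS factors through the standard embedding of the Hermitian symmetric domain $X^+$ into the period domain $\cD^+$; in particular the period map $\Phi_{\Sh^0_K}$, and a fortiori its restriction $\Phi_S$, is an immersion on the smooth locus. By the observation recorded just after the definition of positive in the paper, an irreducible subvariety $Y\subset S$ is then positive if and only if $\dim Y>0$. Combining the two dictionary entries with the output of \Cref{main} gives exactly the statement of \Cref{cor1}. I do not expect any genuine obstacle here: all the substantive content is absorbed into \Cref{main}, and the Shimura-variety setting provides the cleanest instance to which it applies; the only non-tautological point, the immersivity of the Shimura period map, is classical.
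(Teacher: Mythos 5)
Your proposal is correct and takes essentially the same route as the paper: the paper's ``proof'' is exactly the paragraph preceding \Cref{cor1}, namely the identification of special subvarieties of $S$ for $\VV_{|S}$ with components of $S\cap Y$ for $Y$ special in $\Sh^0_K(\G, X)$, followed by a direct appeal to \Cref{main}; your second dictionary entry (immersivity of the Shimura period map, hence positive $\Leftrightarrow$ positive-dimensional) is left implicit there but is exactly the point made in the paper right after the definition of positive, so making it explicit is a helpful clarification rather than a new idea.

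One small imprecision worth noting: you assert that the strict special subvarieties of $S$ correspond \emph{exactly} to components of $S\cap Y$ with $Y$ strict. This fails when $S$ is not Hodge-generic in $\Sh^0_K(\G, X)$: in that case the special closure $\langle S\rangle$ of $S$ in $\Sh^0_K(\G, X)$ is itself strict, and $S$ is a component of $S\cap\langle S\rangle$, so $S$ belongs to the corollary's family even though it is not strict in $S$. Thus the corollary's family may be $\HL(S, \VV_{|S}^\otimes)_\pos \cup \{S\}$ rather than $\HL(S, \VV_{|S}^\otimes)_\pos$. This does not affect the conclusion --- once $S$ itself is in the family, it is the unique maximal element and its union is all of $S$, so both alternatives hold trivially --- but you should either restrict to the Hodge-generic case where the identification is exact, or note that the extra element $S$ renders the statement vacuously true.
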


\noi
In the case of $\Sh_K^0(\G, X)= \cA_g$ this reads:
\begin{cor} \label{cor1'}
  Let $S \subset \cA_g$ be a Hodge-generic closed irreducible
  subvariety. Either the set of positive dimensional closed irreducible subvarieties
of $S$ which are not Hodge generic has 
finitely many maximal elements (for the inclusion), or their union is Zariski-dense in
$S$.
\end{cor}

\Cref{cor1}, which describes the distribution of all positive
dimensional intersections of the Hodge generic $S$ with the special
subvarieties of $\Sh^0_K(\G, X)$, should be compared with the
classical Andr\'e-Oort conjecture, which
predicts under the same hypotheses that there are only finitely many special subvarieties of
$\Sh^0_K(\G, X)$ {\em contained} in $S$ and maximal for these properties. The Andr\'e-Oort conjecture
has been proven when $\Sh^0_K(\G, X)$ is of abelian type, for instance for $\Sh^0_K(\G, X)
= \cA_g$. We refer to \cite{KUY} for a survey on the Andr\'e-Oort conjecture. While the Andr\'e-Oort conjecture is an
``atypical intersection'' statement in the sense of \cite{Zannier},
\Cref{cor1} may be thought of as 
a ``typical intersection'' statement. In particular both statements
seem completely independent.

\smallskip
More generally \Cref{main} is
the ``typical intersection'' counterpart to the ``atypical
intersection'' conjecture for $\ZZ$VHS proposed in
\cite[Conj. 1.9]{klin} (which generalizes the Zilber-Pink
conjectures for Shimura varieties). It provides an answer to the
geometric part of the na\"ive \cite[Question 1.2]{klin} (we 
warn the reader that our $\HL(S, \VV^\otimes)$ is denoted $\HL(S,
\VV)$ in \cite{klin}).

\smallskip
Even in the setting of \Cref{cor1} or \Cref{cor1'}, we don't know of any
simple criterion for deciding whether $\HL(S, \VV_{|S}^\otimes)_\pos$
is a strict closed algebraic subvariety of $S$ or Zariski-dense in $S$. For $\Sh^0_K(\G, X) = \cA_g$,
Izadi \cite{Iz}, following ideas of \cite{ColPi}, proved that 
  $\HL(S, \VV_{|S}^\otimes)$ is analytically (hence Zariski-)
  dense in $S$  for any irreducible $S \subset \cA_g$ of codimension
  at most $g$. Her proof adapts immediately to show that $\HL(S, \VV_{|S}^\otimes)_\pos$ is analytically dense in $S$ if
$S$ has codimension at most $g-1$. Generalizing the results of \cite{Iz} to a general connected Shimura
variety $\Sh^0_K(\G, X)$, Chai (see \cite{Chai}) showed the
following. Let $\HH \subset \G$ be a Hodge subgroup. Let
$\HL(S,\VV^\otimes, \HH) \subset \HL(S, \VV^\otimes)$ denote the subset of points $s \in S$ whose
Mumford-Tate group $\MT_s(\VV)$ is $\G(\QQ)$-conjugated to $\HH$. Then
there exists an explicit constant $c(\G, X, \HH) \in \NN$, whose
value is $g$ in the example above, which
has the property that $\HL(S, \VV^\otimes, \HH)$, hence also $\HL(S,
\VV^\otimes)$ is analytically dense in $S$ as soon as $S$ has
codimension at most $c(\G, X, \HH)$ in $\Sh_K(\G, X)$. Once more it
follows from the analysis of the proof of \cite{Chai} that $\HL(S,
\VV^\otimes)_\pos$ is analytically dense in $S$ as soon as $S$ has codimension at
most $c(\G, X, \HH)-1$.

\subsubsection{Example 2: classical Noether-Lefschetz locus} \label{Example2}
Let $B \subset \proj H^0(\proj^3_\CC, \cO(d))$ be the open subvariety
parametrizing the
smooth surfaces of degree $d$ in $\proj^3_\CC$. From now on we suppose
$d>3$. The classical Noether theorem
states that any surface $Y \subset \proj^3_\CC$ corresponding to a
very general point $[Y] \in B$ has Picard group $\ZZ$:
every curve on $Y$ is a complete intersection of $Y$ with another
surface in $\proj^3_\CC$. The countable union $\NL(B)$ of closed algebraic
subvarieties of $B$ corresponding to surfaces with bigger Picard group
is called the Noether-Lefchetz locus of $B$.
Let $\VV \to B$ be the $\ZZ$VHS $R^2f_*\ZZ$, where $f: \cY \to
B$ denotes the universal family of surfaces of degree $d$. Clearly $\NL(B) \subset \HL(B,
\VV^\otimes)$. Green (see \cite[Prop.5.20]{Voisin}) proved
that $\NL(B)$ is analytically dense in $B$ (see also \cite{CHM} for a
weaker result). In
particular $\HL(B, \VV^\otimes)$ is dense in $B$. Once more
the analysis of Green's proof shows that in fact $\HL(B,
\VV^\otimes)_\pos$ is dense in $B$.
Now \Cref{main} implies the following:

\begin{cor} \label{cor2}
Let $S \subset B$ be a Hodge-generic closed irreducible subvariety.
Either $S \cap \HL(B, \VV^\otimes)_\pos$ contains only finitely many maximal positive
dimensional closed irreducible subvarieties of $S$, or the union of such subvarieties is Zariski-dense in $S$.
\end{cor}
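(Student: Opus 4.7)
The plan is to deduce \Cref{cor2} from \Cref{main} applied to the restricted $\ZZ$VHS $\VV_{|S}$ on $S$. Two ingredients are required: verify that the hypothesis of \Cref{main} that $\MT(S, \VV_{|S})$ is non-product holds in this setting, and check that the dichotomy for $\HL(S, \VV_{|S}^\otimes)_\pos$ supplied by \Cref{main} translates into the dichotomy for $S \cap \HL(B, \VV^\otimes)_\pos$ of the statement.

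For the first, since $S$ is Hodge-generic in $B$ we have $\MT(S, \VV_{|S}) = \MT(B, \VV)$. The $\QQ$VHS $\VV_\QQ$ splits as the sum of the constant rank-one sub-VHS generated by a hyperplane section class and the complementary primitive part $\VV_{\textnormal{prim}}$, and the Mumford--Tate group acts trivially on the first factor. A classical computation for Noether--Lefschetz families (which for $d = 4$ recovers the generic Mumford--Tate group of a very general K3 surface) identifies the generic Mumford--Tate group acting on $\VV_{\textnormal{prim}}$ as the full $\mathbf{SO}$ of the intersection form. Since for $d > 3$ the primitive rank is at least $21$, this special orthogonal group is simple with simple adjoint, so $\MT(S, \VV_{|S})$ is non-product.

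For the second, Griffiths' infinitesimal Torelli theorem for smooth surfaces of degree $d > 3$ in $\proj^3_\CC$ asserts that $\Phi_B$ is an immersion; restricting to $S$, so is $\Phi_S$. Hence an irreducible subvariety $Y \subset S$ is positive for $\VV_{|S}$ if and only if it is positive-dimensional. Moreover, any irreducible positive-dimensional $Y \subset S$ contained in the countable union $\HL(B, \VV^\otimes)_\pos$ of proper algebraic subvarieties of $B$ must lie in some positive strict special subvariety $Z \subset B$ (standard Baire argument for algebraic subsets of an irreducible variety); the irreducible component of $S \cap Z$ containing $Y$ is then a positive strict special subvariety of $S$ for $\VV_{|S}$, using Hodge-genericity of $S$ in $B$ to identify the generic Mumford--Tate group of this component with that of $Z$. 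Conversely, any positive strict special subvariety of $S$ for $\VV_{|S}$ is contained in some strict special subvariety of $B$, hence in $S \cap \HL(B, \VV^\otimes)_\pos$. This identifies the maximal positive-dimensional subvarieties of $S$ contained in $S \cap \HL(B, \VV^\otimes)_\pos$ with the maximal elements of $\HL(S, \VV_{|S}^\otimes)_\pos$.

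Applying \Cref{main} to $(S, \VV_{|S})$ then gives the claimed dichotomy: either $\HL(S, \VV_{|S}^\otimes)_\pos$ is a finite union of strict positive special subvarieties of $S$, in which case it has finitely many maximal positive-dimensional elements in $S \cap \HL(B, \VV^\otimes)_\pos$; or it is Zariski-dense in $S$. The principal obstacle is the verification that $\MT(B, \VV)$ is non-product, which rests on a classical but non-trivial computation of Mumford--Tate groups for the Noether--Lefschetz family; the identification step is essentially bookkeeping given infinitesimal Torelli and the Hodge-genericity of $S$ in $B$.
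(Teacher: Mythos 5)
Your overall strategy---apply \Cref{main} to $(S, \VV_{|S})$, verify the non-product hypothesis via the Mumford--Tate group of the Noether--Lefschetz family, and translate between $\HL(S, \VV_{|S}^\otimes)_\pos$ and $S \cap \HL(B, \VV^\otimes)_\pos$---is the right one, and it is what the paper does implicitly (the paper gives no explicit proof, presenting the corollary as an immediate consequence of the theorem). Your treatment of the Mumford--Tate hypothesis is serviceable: the cleanest route is to invoke the big monodromy theorem (the algebraic monodromy group of the primitive $H^2$ of the universal family is the full special orthogonal group) together with Andr\'e's normality theorem, which forces the derived Mumford--Tate group to be $\mathbf{SO}$ of the primitive form and hence its adjoint to be simple; phrasing this as a ``classical computation'' is acceptable shorthand.

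The gap is in the translation step, specifically in the assertion that ``Griffiths' infinitesimal Torelli theorem for smooth surfaces of degree $d > 3$ in $\proj^3_\CC$ asserts that $\Phi_B$ is an immersion.'' This is false for the $B$ of the paper. The variety $B$ is an open subset of $\proj H^0(\proj^3_\CC, \cO(d))$, i.e.\ a space of \emph{equations}, not a moduli space. The group $\mathbf{PGL}_4$ acts on $B$ with generically $15$-dimensional orbits, and the family $g \mapsto g\cdot Y_0$ over an orbit is isomorphic to a constant family, so the associated $\ZZ$VHS is constant on each orbit and $\Phi_B$ contracts each orbit to a point. The infinitesimal Torelli theorem for hypersurfaces asserts that the period map has injective differential \emph{transverse to the orbits}, i.e.\ that it is an immersion on the moduli quotient $B/\mathbf{PGL}_4$, not on $B$ itself. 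Consequently ``positive'' and ``positive-dimensional'' do \emph{not} coincide for subvarieties of $S$: there may be positive-dimensional irreducible subvarieties of $S$ (e.g.\ components of intersections of $S$ with $\mathbf{PGL}_4$-orbits of Hodge-special surfaces) that are contracted by $\Phi_S$ and hence not positive. These are not captured by $\HL(S, \VV_{|S}^\otimes)_\pos$, so the identification on which your reduction rests does not go through. Relatedly, the claim that an irreducible component of $S \cap Z$ has the \emph{same} generic Mumford--Tate group as $Z$ and is therefore a special subvariety of $S$ does not follow from Hodge-genericity of $S$; in general one only gets containment of Mumford--Tate groups, and the component need only be \emph{contained in} (not equal to) its special closure in $S$. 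To repair the argument one would need either to pass to a transverse slice (or the moduli quotient, where infinitesimal Torelli does give an immersion) before applying \Cref{main}, or to reformulate the conclusion in terms of positive rather than positive-dimensional subvarieties.
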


\begin{rem}
  We don't know if \Cref{cor2} remains true if we replace $\HL(B,
  \VV^\otimes)_\pos$ with $\NL(B)$.
\end{rem}

\subsection{Ingredients and strategy for \Cref{main}}

Let us now describe the main ingredients and the strategy for the proof of \Cref{main}.
From now on we do not differentiate a
complex algebraic variety $X$ from its associated complex analytic
space $X^\an$, the meaning being clear from the context.

\subsubsection{On the Zariski-closure of the $F^i$-loci}

Given $\lambda \in \cV$ and $i \in \ZZ$ let $\VV^i (\lambda) \subset
\cV$ be the locus of $\cV$ where the flat transport of $\lambda$
belongs to $F^i \cV$; and let $S^i(\lambda):= p(\VV^i(\lambda))
\subset S$ be the locus of points of $S$ where some determination of the
flat transport of $\lambda$ at $s$ belongs to $F^i\cV$. Here $p: \cV
\to S$ denotes the natural projection. These
definitions are reviewed in details in \Cref{notations}.

\smallskip
When $i = 0$ and $\lambda \in \VV_\QQ$ is rational, $\VV^0(\lambda)$
is the locus where the flat transport of $\lambda$ is a rational Hodge
class. The precise version of \Cref{CDK} is that for $\lambda$
rational, $\VV^0(\lambda)$ is a closed algebraic
subvariety of $\cV$, finite over the finite union of special subvarieties
$S^0(\lambda)$. 

\smallskip
To study the Zariski-closure of $\HL(S, \VV^\otimes)$ the first idea of this paper consists in studying the geometry of
$S^i(\lambda)$ {\em for a general, not necessarily
rational,} $\lambda \in \VV_\CC$. In this generality the subsets $S^i(\lambda)$ are usually not even complex analytic
subvarieties of $S$, see \Cref{notations}. However we
manage to describe the Zariski-closure of any of their components (see \Cref{component} for the notion of
component of $S^i(\lambda)$):

\begin{theor} \label{closure}
For any $i \in \ZZ$ and any $\lambda \in \VV_\CC$, the Zariski-closure of
any of the (possibly infinitely many) components of
$S^i(\lambda)$ is a weakly special subvariety of $S$ for $\VV$.
\end{theor}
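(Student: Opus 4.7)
The strategy is to translate the locus $S^i(\lambda)$ into period-theoretic terms and to invoke the Ax-Schanuel theorem for variations of Hodge structure of Bakker--Tsimerman. Let $\tilde{\Phi}\colon \tilde{S}\to \cD$ be the period map associated with $\VV$ (so that $\Phi_S = \pi\circ\tilde{\Phi}$), with $\check{\cD}$ the compact dual flag variety. The condition $\lambda\in F^i$ cuts out the closed algebraic subvariety
$$\cD_{\lambda,i}:=\{F^\bullet\in\check{\cD}\colon\lambda\in F^i\}\subset \check{\cD},$$
and the flat trivialization $\tilde{\cV}\simeq \tilde{S}\times V$ canonically identifies $\tilde{S}^i(\lambda):=\tilde{\Phi}^{-1}(\cD_{\lambda,i})$ with $\VV^i(\lambda)$. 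Under this identification, components of $\VV^i(\lambda)$, and hence of $S^i(\lambda)$, are precisely the $\pi$-projections of irreducible analytic components of $\tilde{S}^i(\lambda)$.

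Fix such an analytic component $\tilde{Y}$, set $Y:=\pi(\tilde{Y})$ and $Z:=\overline{Y}^{\Zar}$. I aim to show that $Z$ is weakly special. Let $W$ be a weakly special subvariety of $S$ containing $Z$, minimal for inclusion among such; this exists because $S$ itself is weakly special and the Zariski topology on $S$ is noetherian. Let $\tilde{W}\subset\tilde{S}$ be the irreducible analytic component of $\pi^{-1}(W)$ containing $\tilde{Y}$, so that $\tilde{Y}$ is in particular an irreducible analytic component of the intersection $\tilde{W}\cap\tilde{\Phi}^{-1}(\cD_{\lambda,i})$.

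The main input will be the Ax-Schanuel theorem for variations of Hodge structure of Bakker--Tsimerman. The form needed here can be stated as follows: for any irreducible algebraic subvariety $V\subset S$ with lift $\tilde{V}$, any algebraic $\cD'\subset \check{\cD}$, and any irreducible analytic component $U$ of $\tilde{V}\cap \tilde{\Phi}^{-1}(\cD')$, the Zariski-closure $\pi(U)^{\Zar}$ is a weakly special subvariety of $V$. Applied with $V=W$, $\cD'=\cD_{\lambda,i}$, and $U=\tilde{Y}$, this would show that $Z$ itself is weakly special. The minimality of $W$ then forces $W=Z$, which proves the theorem.

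The main obstacle is extracting the ``clean'' form of Ax-Schanuel used above from the original Bakker--Tsimerman atypical-intersection formulation. I would carry this out by Noetherian descent on weakly special subvarieties: if $\pi(U)^{\Zar}$ is not itself weakly special, Ax-Schanuel places it inside a strict weakly special subvariety $V'\subsetneq V$, at which point the same argument is iterated inside $V'$. Care is needed to track the algebraic monodromy group through each descent step, and to verify that the final output is indeed weakly special for $\VV$ in the sense of the provisional definition introduced in the excerpt.
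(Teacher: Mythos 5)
Your core observation is exactly the one the paper uses: the condition $\lambda\in F^i$ is cut out by the algebraic subvariety $\cD_{\lambda,i}\subset\hat{\cD}$ (the paper's $\hat{\cD}(\lambda)$), so each lifted component $\tilde{Y}$ of $\tilde\Phi^{-1}(\cD_{\lambda,i})$ is an \emph{algebraic} subvariety of $\tilde{S}$ in the bi-algebraic sense (\Cref{bialgebraic}), and a transcendence theorem then forces $\overline{\pi(\tilde{Y})}^\Zar$ to be weakly special. That is Proposition \ref{algeb} plus \Cref{Ax-Lindemann}, so you have the right proof.

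Where you diverge is in the transcendence input. The paper invokes the Ax--Lindemann form \Cref{Ax-Lindemann} directly: given that $\tilde{Y}$ is algebraic in $\tilde{S}$, one line of that theorem gives $\overline{\pi(\tilde{Y})}^\Zar$ weakly special, and there is nothing left to do. You instead reach for the full Ax--Schanuel statement of Bakker--Tsimerman and propose to extract an Ax--Lindemann-type consequence by Noetherian descent through a minimal weakly special $W\supset Z$, carefully tracking the algebraic monodromy group along the way. That descent is a known and correct way to deduce Ax--Lindemann from Ax--Schanuel, but it is redundant here: the paper already states the needed consequence as \Cref{Ax-Lindemann}, applied to the ambient $S$ (your ``clean form'' with $V=S$), so the minimal-$W$ step and the entire iteration can be deleted. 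There is also a small imprecision worth fixing: $\tilde{S}^i(\lambda)\subset\tilde{S}$ and $\VV^i(\lambda)\subset\cV$ are not literally identified by the flat trivialization; rather, $\tilde s\mapsto\pi(\tilde s,\lambda)$ is a surjective local biholomorphism from $\tilde{S}^i(\lambda)$ onto $\VV^i(\lambda)$, which is enough to match up irreducible components, and that is what you actually use.
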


Here the weakly special subvarieties of $S$ for $\VV$ are a
generalisation, introduced in \cite{klin}, of the special subvarieties
of $S$ for $\VV$. See \Cref{defiWSperiod} for the original definition and \Cref{equi}
for a more geometric description. Any point of $S$ is weakly special
for $\VV$ hence \Cref{closure} says nothing for components of
$S^i(\lambda)$ which are points. On the other hand there are few
weakly special subvarieties of $S$ for $\VV$ of positive
dimension, hence \Cref{closure} provides a strong information on
positive dimensional
components of $S^i(\lambda)$.

\smallskip
\Cref{closure} is a result in functional transcendence. It follows
mainly from the Ax-Lindemann \Cref{Ax-Lindemann}  for $\ZZ$VHS
conjectured in \cite[Conj.7.6]{klin} as a
special case of \cite[Conj.7.5]{klin}, proven by Bakker-Tsimerman \cite[Theor. 1.1]{BT}.

\subsubsection{A global algebraicity result for the locus of classes
  of $F^i$-type}

The second ingredient in the proof of \Cref{main} is a global algebraicity statement for the union of the
$F^i$-loci of dimension bounded below. Precisely, for any integer $d\geq 0$, let $\VV^i_{\geq d} \subset F^i \cV$ be the
locus of classes $\lambda \in F^i \cV$ whose orbit under monodromy is
infinite and such that $\VV^i(\lambda)$ is ``of
dimension at least $d$ at $\lambda$'', see \Cref{component}. Let $S^i_{\geq
  d}(\VV) = p(\VV^i_{\geq d})$ be its projection in $S$.

\begin{theor} \label{A}
Let $\VV$ be a polarized $\ZZ$VHS on a smooth quasi-projective variety
$S$. 
For any $i \in \ZZ$ and any $d \in \NN^*$, the subset $\VV^i_{\geq d}
\subset F^i \cV$ is a closed
algebraic subvariety of $\cV$. Its projection $S^i(\VV)_{\geq d}$  is a closed
algebraic subvariety of $S$.
\end{theor}

In words: the property of a point $\lambda \in \cV$ of having a flat
leaf intersecting $F^i\cV$ in dimension at least $d>0$ is closed in the Zariski-topology.
\Cref{A} is in fact a special case of a more general result on algebraic flat
connections, see \Cref{Aplus}. It uses in a crucial way the properties
of parallel transport.


\subsubsection{Strategy for the proof of \Cref{main}}
Let us indicate how \Cref{main} follows from
\Cref{closure} and \Cref{A}.

\smallskip
First, using a finiteness
result of Deligne, we are reduced to showing that for $\VV$ a
polarizable $\ZZ$VHS with non-product generic Mumford-Tate group, the positive dimensional Hodge locus $\HL(S, \VV)_\pos$ is either a finite union of strict special subvarieties of $S$ for $\VV$
or is Zariski-dense in $S$. 

Let us assume for simplicity that the period map
$\Phi_S$ for $\VV$ is an immersion.  In that case the locus of exceptional
rational Hodge classes in $\cV$ is $ \VV_\QQ \cap \VV^0_{\geq 0}= \VV_\QQ \cap
F^0\cV$; the Hodge locus $\HL(S, \VV)$ is the projection $p(\VV_\QQ
\cap \VV^0_{\geq 0})$; and the positive dimensional Hodge locus $\HL(S, \VV)_\pos $ is the
projection $p (\VV_\QQ \cap \VV^0_{\geq 1}) \subset S^0_{\geq 1}(\VV)$.
The Zariski-closure $\ol{\HL(S, \VV)_\pos}^\Zar$ coincides with
$p(\ol{\VV^0_{\geq 1} \cap \VV_\QQ}^\Zar)$.

In \Cref{sat} we refine
\Cref{A} to show that there exists a non-empty Zariski open subset $\cU$ of $\ol{\VV^0_{\geq 1} \cap
\VV_\QQ}^\Zar \subset \cV$ such that for every point $\lambda \in \cU$ there exists a
component of $\VV^0(\lambda)$ of dimension at least $1$ contained in
$\ol{\VV^0_{\geq 1} \cap \VV_\QQ}^\Zar$. Projecting to $S$, there exists a non-empty
Zariski open subset $U$ of $\ol{\HL(S, \VV)_\pos}^\Zar$ such that for
every point $x \in U$ there exists a class $\lambda \in \cV$ and a
component of $S^0(\lambda)$ of dimension at least $1$ contained in
$\ol{\HL(S, \VV)_\pos}^\Zar$ and passing through $x$.

By \Cref{closure} the Zariski-closure of such a component of
$S^0(\lambda)$ is a positive dimensional weakly
special subvariety of $S$ for $\VV$. We thus obtain that there exists a non-empty
Zariski open subset $U$ of $\ol{\HL(S, \VV)_\pos}^\Zar$ such that for
every point $x \in U$ there exists a weakly special subvariety $Y_x$
of $S$ for $\VV$ contained in $\ol{\HL(S, \VV)_\pos}^\Zar$ and
passing through $\VV$.

Either one of these $Y_x$ equals $S$, hence $\ol{\HL(S, \VV)_\pos}^\Zar =S$. Otherwise the structure of weakly
special subvarieties and the assumption that $\MT(S, \VV)$ is
non-product imply that each $Y_x$ is contained in a strict special
subvariety $S_x$ of $S$ for $\VV$. As such an $S_x$ is contained in
$\HL(S, \VV)_\pos$ it follows that $\ol{\HL(S, \VV)_\pos}^\Zar=
\HL(S, \VV)_\pos$. But then $\HL(S, \VV)_\pos$ is a
finite union of special subvarieties.

The general case where $\Phi_S$ is not a submersion is dealt with similarly
using stratifications and the geometry of $S^0_d(\VV)$ for all
$d\geq 1$.

\subsubsection{A converse to \Cref{CDK}}
Recall that for $\lambda \in \VV_\QQ$ the precise version of
\Cref{CDK} states that $\VV^0(\lambda)$ is a closed
algebraic subvariety of $\cV$, finite over the
finite union of special subvarieties $S^0(\lambda)$. As a preliminary
to \Cref{closure}, \Cref{A} and \Cref{main}, we also provide for the
convenience of the reader the following kind of converse to \Cref{CDK}, which might be well-known to experts but which does
not seem to have appeared before.

\begin{prop} \label{converse}
Let $\lambda \in \cV$ and $i \in \ZZ$ be such that $\VV^i(\lambda)$ is a closed
algebraic subvariety of $\cV$. Then the projection $S^i(\lambda)$ of $\VV^i(\lambda)$ is a
finite union of special subvarieties of $S$. Moreover, $\VV^i(\lambda)$ is finite over $S^i(\lambda)$.
\end{prop}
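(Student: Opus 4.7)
\emph{Proof plan.} I would proceed in three steps, in increasing order of difficulty.

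\emph{(i) Finiteness of the projection.} For any $s\in S$, the fiber of the flat leaf $\VV(\lambda)\to S$ over $s$ is, via flat transport, in bijection with the countable monodromy orbit $\rho(\pi_1(S,s_0))\cdot\lambda\subset V$, so the fiber of $\VV^i(\lambda)$ over $s$ is at most countable. By the algebraicity hypothesis it is also a closed algebraic subvariety of the affine space $V_s$, and a countable closed algebraic subvariety of an affine space is necessarily finite. To upgrade quasi-finiteness to finiteness of $\VV^i(\lambda)\to S^i(\lambda)$, I would use that locally on $S^{\an}$ the flat leaf $\VV(\lambda)$ is a disjoint union of holomorphic sections of $\cV$, so $\VV^i(\lambda)=\VV(\lambda)\cap F^i\cV$ is locally a \emph{finite} disjoint union of graphs of holomorphic sections above any point of $S^i(\lambda)$; no sequence can escape to infinity, giving analytic properness, and algebraic finiteness follows by Zariski's main theorem.

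\emph{(ii) Flat sections and a weakly special conclusion.} Let $W$ be an irreducible component of $S^i(\lambda)$ and $n\colon W^\nu\to W$ its normalization. Up to a finite \'etale cover $\widetilde W\to W^\nu$ trivializing the pullback of $\VV^i(\lambda)|_W$, we obtain globally defined flat sections $\lambda_1,\dots,\lambda_m$ of $n^{*}\VV|_{\widetilde W}$ lying pointwise in $F^i$. By Schmid's theorem of the fixed part, the algebraic monodromy group $\HH_W$ of $W$ for $\VV$ fixes the subspace $V^c:=V^{\HH_W}\subset V$, and $V^c$ underlies a constant sub-$\QQ$VHS $\VV^c$ of $n^{*}\VV|_{\widetilde W}$; since $\VV^c$ is constant and each $\lambda_j$ is flat and lies in $F^i$, we conclude $\lambda_j\in F^iV^c$ for the constant filtration. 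This already shows $W$ is maximal among irreducible algebraic subvarieties of $S$ with algebraic monodromy $\HH_W$: any $W'\supsetneq W$ with $\HH_{W'}=\HH_W$ would inherit the same constant $\VV^c$ and the same $\lambda_j\in F^iV^c$, forcing $W'\subset S^i(\lambda)$ and contradicting the maximality of $W$ as an irreducible component. Hence $W$ is at least weakly special in the sense of \Cref{weakly special}.

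\emph{(iii) From weakly special to special.} It remains to exclude any irreducible $W'\supsetneq W$ with $\MT(W',\VV)=\MT(W,\VV)=:M$. By Andr\'e's normality theorem, $\HH_W$ and $\HH_{W'}$ are both normal in $M^{\der}$, with $\HH_W\subseteq\HH_{W'}$. The condition $\lambda_j\in F^iV^c$ translates into a closed algebraic condition on the Hodge cocharacter $\mu\in M(\CC)$, and since $M$ governs the generic Hodge cocharacter on both $W$ and $W'$, one should be able to show that the extra monodromy in $\HH_{W'}/\HH_W$ still preserves the $\lambda_j$ as Hodge-filtered classes; granting this, $W'\subset S^i(\lambda)$, contradicting the maximality of $W$. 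The main obstacle of the entire argument is precisely this last rigidity statement: it requires a careful analysis of the closed algebraic subvariety $\{F^\bullet\in\check\cD:\lambda\in F^i\}$ of the compact dual $\check\cD$ of the period domain of $M$, and of the way its $M$-stabilizer interacts with the normal subgroups of $M^{\der}$ containing $\HH_W$.
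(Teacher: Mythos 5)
Your plan diverges from the paper's proof in an essential way, and the divergence is precisely where your step~(iii) gets stuck. The paper first passes (via Selberg's lemma) to a finite \'etale cover of $S$ on which the monodromy $\rho(\pi_1(S))$ is torsion-free. For a component of $S^i(\lambda)$ with smooth normalized locus $S'$, it observes that $\widetilde{\VV'^i(\lambda)}$ surjects onto $\tilde{S'}$ while sitting inside $\tilde{S'}\times\{\lambda\}$, so $\VV'^i(\lambda)=\VV'(\lambda)$; the fiber over $s'_0$ is then the monodromy orbit of $\lambda$, which is countable and algebraic, hence finite, giving finite \'etale-ness over $S'$. The decisive move is to consider the smallest $\QQ$-sub-local system $\WW'_\QQ\subset\VV'_\QQ$ whose complexification contains $\VV'^i(\lambda)$: it has finite monodromy, hence (by torsion-freeness) trivial monodromy, hence by the theorem of the fixed part it is a \emph{constant sub-$\QQ$VHS}. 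This identifies $n(S')$ as the smooth locus of a component of the Hodge locus cut out by the $\QQ$-rational datum $W_\QQ$, and such a component is a special subvariety directly: any larger irreducible subvariety with the same generic Mumford--Tate group would still fix the $\QQ$-rational Hodge tensors defining $W_\QQ$ and hence lie in the same Hodge locus. No ``weakly-special-to-special'' upgrade is attempted.

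Your step~(iii) is the genuine gap, and you acknowledge it. The route you sketch --- showing that the extra monodromy in $\HH_{W'}/\HH_W$ preserves the $\lambda_j$ in $F^i$ --- is not obviously attainable: if $\HH_{W'}\supsetneq\HH_W$, the $\lambda_j$ need not even extend as flat sections over a cover of $W'$, let alone remain in $F^i$, and Andr\'e's normality of $\HH_W,\HH_{W'}$ in $M^{\der}$ does not by itself give the needed rigidity. The paper avoids verifying Mumford--Tate maximality of $W$ by hand, replacing it with the observation that the constraining object $\WW'_\QQ$ is $\QQ$-rational and Hodge-theoretic, so its Hodge locus is special by definition. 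Two lesser issues: in step~(ii) you apply the theorem of the fixed part to $V^c=V^{\HH_W}$, but Schmid's theorem concerns $V^{\rho(\pi_1)}$, which may be strictly smaller; you need a further finite \'etale cover arranging $\rho(\pi_1)\subset\HH_W$ before quoting it. And in step~(i), the passage from fiberwise finiteness to properness of $\VV^i(\lambda)\to S^i(\lambda)$ is left vague (``no sequence can escape to infinity''); the paper instead gets closedness and algebraicity of $S^i(\lambda)$ from properness of $\proj\cV\to S$ applied to $\VV^i([\lambda])$, and deduces finiteness from the flat-leaf analysis over $S'$.
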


\subsection{Organization of the paper}
The paper is organized as follows. \Cref{notations} mostly recalls the basic
definitions and properties of $\VV^i(\lambda)$, $S^i(\lambda)$
and their components. \Cref{ws} studies the geometric properties of the weakly special subvarieties of $S$ for $\VV$ needed
in the following sections. In particular we prove that they are
closed algebraic subvarieties, obtain a geometric
characterisation (\Cref{equi}), prove that they coincide
in fact with the bi-algebraic subvarieties of $S$ for the natural bi-algebraic
structure on $S$ defined by $\VV$ (see \Cref{bi-alg}, a result stated in \cite[Prop.7.4]{klin} without
proof), and state the Ax-Lindemann \Cref{Ax-Lindemann} for them. The following
sections provide the proofs of \Cref{converse}, \Cref{closure},
\Cref{A} and \Cref{main} successively.

\subsection{Acknowledgments}
We thank O. Benoist and J. Chen for their remarks on this work.

\section{Some notation} \label{notations}

\subsection{Notation for local systems} 

Let $S$ be a smooth connected complex quasi-projective
variety. Let $\VV_\ZZ$ be a finite rank locally free
$\ZZ$-local system on $S$ and $(\cV, \nabla)$ the regular
algebraic connection on $S$ \cite[Theor. 5.9]{Del} associated to $\VV_\ZZ$.

\smallskip
The local system $\VV_\ZZ$ can be uniquely written as $\tilde{S} \times_{\rho}
V_\ZZ$, where $\pi:\tilde{S} \to S$ denotes the complex analytic universal
cover of $S$ associated to the choice of a point $s_0$ in $S$, $V_\ZZ:=H^0(\tilde{S},
\pi^{-1} \VV_\ZZ)\simeq \VV_{s_{0}, \ZZ}$ is a free $\ZZ$-module of
finite rank and $\rho: \pi_1(S, s_{0}) \to
\GL(V_\ZZ)$ denotes the monodromy representation of the local system
$\VV_\ZZ$. This corresponds to a complex analytic trivialization of $\tilde{\cV} := \cV
\times_S \tilde{S}$ as a product $\tilde{S} \times V$, where $V: =V_\ZZ \otimes
_\ZZ \CC$. We still let $\pi: \tilde{S} \times V \to
\cV$ denote the natural projection.
Recall the following classical definition:

\begin{defi} \label{monodr}
Given a closed irreducible algebraic
subvariety $i: Y\hookrightarrow S$, let $n: Y^\norm \to Y$ be its
normalisation.  The algebraic monodromy group $\HH_Y$ of $Y$ for
$\VV_\ZZ$ is the (conjugacy class of the)
identity component of the Zariski-closure in $\GL(V_\QQ)$ of the monodromy of the
restriction to $Y^{\norm}$ of the local system $n^*\VV_\ZZ$.
\end{defi}

\begin{defi} \label{flat leave}
Given $\lambda = \pi (\tilde{s}, \lambda_0) \in \cV$ we define $\VV(\lambda):= \pi(\tilde{S} \times
\{\lambda_0\}) \subset \cV$ the flat leave of $\lambda$ for $\nabla$.
\end{defi}

The set $\VV(\lambda)$ is naturally a connected closed complex analytic subspace of the \'etal\'e
space of the complex local system $\VV_\CC:= \VV_\ZZ \otimes_\ZZ \CC$. We will always endow
$\VV(\lambda)$ with its reduced analytic structure. When $\lambda =\pi (\tilde{s}, \lambda_0) $ is not a complex multiple of an element of
$\VV_\ZZ$, the orbit of $\lambda_0$ in
$V$ under the monodromy group $\rho(\pi_1(S, s_{0})) \subset \GL(V)$
has usually accumulation points, in which case  $\VV(\lambda)$ is not
an analytic subvariety of $\cV$.

\subsection{Notation for $\ZZ$VHS}

Suppose now that $\VV:=(\VV_\ZZ, \cV, F^\bullet, \nabla)$ is a
$\ZZ$VHS on $S$. All $\ZZ$VHS are assumed to be polarizable. In
particular the algebraic monodromy group $\HH_S$ is semi-simple.

\begin{defi} \label{type}
  Let $\lambda \in \cV$ and $i \in \ZZ$. The locus of classes of $F^i$-type $\VV^i
  (\lambda)$ for $\lambda$ is the intersection of the flat leaf
  $\VV(\lambda)$ with $F^i\cV$:
  $$\VV^i(\lambda) := \VV(\lambda) \cap F^i \cV \subset F^i\cV\;\;.$$

The locus of $F^i$-type for $\lambda$ is the projection
$$S^i(\lambda):= p(\VV^i(\lambda))\subset S \;\;.$$
\end{defi}

Again, $\VV^i(\lambda)$ is naturally a complex analytic subspace
(possibly with infinitely many connected components) of the \'etal\'e
space of the complex local system $\VV_\CC:= \VV_\ZZ \otimes_\ZZ
\CC$; when $\lambda$ is not a complex multiple of an element of
$\VV_\CC$ the complex space $\VV^i(\lambda)$ is in general not an
analytic subspace of $\cV$; a fortiori its projection
$S^i(\lambda)\subset S$ is {\it a priori} not a complex analytic
subvariety of $S$.

\begin{rem}
  For $i=0$ and $\lambda \in \VV_\QQ$ the locus $\VV^0(\lambda)$ is also called the locus of
  Hodge classes for $\lambda$, usually denoted $\Hdg(\lambda)$; and
  $S^0(\lambda)$ is the Hodge locus of $\lambda$ considered by Weil, namely the locus $\HL(S, \lambda)$ of
points of $S$ where some determination of the flat transport of $\lambda$ becomes a Hodge
class.
\end{rem}

\begin{defi} \label{component}
Let $\lambda \in \cV$.
  \begin{itemize}
    
\item[(a)] A component of $\VV^i(\lambda)$ is an 
irreducible component of the complex analytic subvariety
$\VV^i(\lambda)$ of the \'etal\'e space of the complex local system 
$\VV$.

\item[(b)] A component of $S^i(\lambda)$ is the image under $p: \cV \to S$ of a
component of $\VV^i(\lambda)$.

\item[(c)]
For $\lambda \in V-\{0\}$, $i \in \ZZ$ and $d \in \NN$ let 
$\VV^i(\lambda)_{\geq d} \subset F^i \cV$, respectively
$S^i(\lambda)_{\geq d} \subset S$, be the union of 
components of $\VV^i(\lambda)$, resp. $S^i(\lambda)$, of dimension at
least $d$.
\end{itemize}
\end{defi}

\begin{rem} \label{proj}
Notice that for $\lambda \in \cV$ and $z \in \CC^*$, $\VV^i(z \lambda)= z \VV^i(\lambda)$ and
$S^i(z \lambda) = S^i (\lambda)$ for any 
$z \in \CC^*$. Hence, for $\lambda \in \cV$ not in the zero section, $S^i(\lambda)$ depends
only on $[\lambda] \in \proj V$.
\end{rem}

\smallskip
For $\lambda= \pi(\ti{s}, \lambda_0) \in \cV$ it follows from the theorem of the fixed part (see \cite[Cor. 7.23]{Schmid}) that $S^i(\lambda) \not
=S$ if and only if the $\HH_Y$-orbit of $\lambda_0$ in $V$ is
not reduced to a point, equivalently if and only if the orbit of $\lambda_0$ under $\rho(\pi_1(S,
s_{0})) \subset \GL(V)$ is infinite. We denote by $\VV_\QQ^\nc$ the direct factor of the local system
$\VV_\QQ$ corresponding to the sum of non-trivial irreducible
$\HH_Y$-factors of $V_\QQ$ (it is naturally a sub-$\QQ$VHS of
$\VV_\QQ$). By abuse of notation we write $\VV^{\nc}_\QQ - \{0\}$ for
$\VV^{\nc}_\QQ$ with the zero-section removed.

\begin{defi}
 We define the locus of non-trivial $F^i$-classes $$\VV^i_{\geq d}:= \bigcup_{\lambda \in \VV_\QQ^\nc -\{0\}} \VV^i(\lambda)_{\geq d} \subset
F^i\cV \quad \textnormal{and} \quad S^i(\VV)_{\geq d}:=
p(\VV^i_{\geq d}) \subset S\;\;.$$
\end{defi}

Thus the locus of non-trivial (rational) Hodge classes for $\VV$ is
$\Hdg(\VV) := \VV_\QQ \cap \VV^0_{\geq 0}$ and the Hodge locus $\HL(S,
\VV)$ is $p(\VV_\QQ \cap \VV^0_{\geq 0})$.

\section{Weakly special subvarieties and bi-algebraic geometry for
  $(S, \VV)$} \label{ws}

In this section we recall the definition of the weakly special subvarieties
of $S$ for $\VV$ given in \cite{klin}, study their geometry and prove
their bi-algebraic characterisation (stated in \cite{klin} 
without proof). We recall below the definitions of Hodge
theory we need and introduced in \cite{klin} (inspired by \cite{Pink89} and \cite{Pink05}),
and refer to \cite{klin} for more details.

\smallskip
Let $\G$ be the generic Mumford-Tate group of $S$ for $\VV$. Any Hodge generic point $s \in S$
defines a morphism of real algebraic groups $h_s: \CC^* \to \G_\RR$.
All such morphisms belong to the same connected component of a $\G(\RR)$-conjugacy
class $\cD$ in $\Hom (\CC^*, \G_\RR)$, which has a natural structure
of complex analytic space (see \cite[Prop.3.1]{klin}). The space
$\cD^+$ is a so-called Mumford-Tate domain, a refinement of the classical period domain for $\VV$ defined by Griffiths. The pair $(\G,
\cD^+)$ is a connected (pure) Hodge datum in the
sense of \cite[Section 3.1]{klin}, called the generic Hodge datum of
$\VV$. The $\ZZ$VHS $\VV$ is entirely described by its period map
$$\Phi_S: S \to \Hod^0(S,\VV) := \Gamma \backslash \cD^+\;\;,$$
where $\Gamma \subset \G(\ZZ)$ is a finite index subgroup and $\Hod^0(S,
\VV) := \Gamma \backslash \cD^+$ is the associated connected Hodge
variety (see \cite[Def. 3.18 and below]{klin}). We denote by
$\tilde{\Phi}_S: \ti{S} \to \cD^+$ the lift of $\Phi_S$.

\subsection{Weakly special subvarieties} \label{wsvarieties}
The weakly special subvarieties of $S$ for $\VV$ are defined in terms
of the weakly special subvarieties of the connected Hodge variety
$\Hod^0(S,\VV)$, which we first recall.

\subsubsection{Weakly special subvarieties of Hodge varieties}
Let $(\G, \cD^+)$ be a connected Hodge datum and $Y= \Gamma \backslash \cD^+$ an associated
connected Hodge variety. Hence $Y$ is an arithmetic quotient in the sense of
\cite[Section 1]{BKT} endowed with a natural complex analytic
structure (which is not algebraic in general). Recall that a Hodge morphism between
connected Hodge varieties is the complex analytic map deduced from a
morphism of the corresponding Hodge data (see \cite[Lemma 3.9]{klin}).
The special and weakly special subvarieties of $Y$ are irreducible
analytic subvarieties of $Y$ defined as follows (see \cite[Def.7.1]{klin}): 
\begin{defi} \label{defiWSperiod}
  Let $Y$ be a connected Hodge variety.

  \begin{itemize}
    \item[(1)] The image of any Hodge morphism $T \to Y$ between connected 
      Hodge varieties is called a special subvariety of $Y$. 
    \item[(2)] Consider any Hodge morphism $\varphi: T_1 \times T_2 \to Y$
      between connected Hodge varieties and any point $t_2 \in
      T_2$. Then the image $\varphi(T_1 \times \{t_2\})$ is called
      a weakly special subvariety of $Y$. It is said to be strict if
      it is distinct from $Y$.
\end{itemize}
\end{defi}

\begin{rem} \label{rem3}
  \cite[Def.7.1]{klin}, valid more generally for $Y$ a mixed Hodge
  variety and generalizing \cite[Def.4.1]{Pink05} to this context, gives the following
  apparently more general definition of a weakly special subvariety. Consider any Hodge morphisms $R
      \stackrel{\pi}{\leftarrow} T \stackrel{i}{\rightarrow} Y$
      between (possibly mixed) connected Hodge varieties and any point $r \in
      R$. Then any irreducible component of $i(\pi^{-1}(r))$ is called
      a weakly special subvariety of $Y$. When $Y$ is pure, i.e. $\G$
      is a reductive group, one easily checks that this definition
      reduces to \Cref{defiWSperiod}(2) above.
    \end{rem}

\begin{rem} \label{rem1}
Considering the connected Hodge variety $T_2=\{t_2\}$ associated to
the trivial algebraic group, any special subvariety
of $Y$ is a weakly special subvariety of $Y$.
\end{rem}

\begin{rem} \label{rem2}
As noticed in \cite[Rem. 4.8]{Pink05} in the case of Shimura varieties,
any irreducible component of an intersection of special  
(resp. weakly special) subvarieties of the Hodge variety $Y$ is a
special (resp. a weakly special) subvariety of $Y$. The proof is easy
and the details are left to the reader.
\end{rem}

\subsubsection{Weakly special subvarieties for $\VV$} \label{wsIntersection}

As in \cite[Prop. 3.20 and Def. 7.1]{klin} we define:

\begin{defi} \label{defiWS}
Let $p:\VV \to S$ be a $\ZZ$VHS over a quasi-projective complex
manifold $S$ with associated period map $\Phi_S:S \to \Hod^0(S, \VV)$.

Any irreducible complex analytic component of $\Phi_S^{-1}(Y)$, where
$Y$ is a special (resp. weakly special) subvariety of the connected
mixed Hodge variety $\Hod^0(S, \VV)$, is 
called a special (resp. weekly special) subvariety of $S$ for
$\VV$. It is said to be strict if it is distinct from $S$.
\end{defi}

Notice that an irreducible component of an intersection of special  
(resp. weakly special) subvarieties of $S$ for $\VV$ is not anymore necessarily a
special (resp. a weakly special) subvariety of $S$ for $\VV$: it might
happen that for $Y \subset \Hod^0(S,\VV)$ a special (resp. weakly special)
subvariety the preimage $\Phi_S^{-1}(Y)$ decomposes as a union $Z_1
\cup Z_2$ with $Z_i$, $i=1,2$ irreducible; in which case $Z_1$ and
$Z_2$ are special (resp. weakly special) subvarieties in $S$ but an irreducible
component of $Z_1 \cap Z_2$ is not. To take this minor inconvenience
into account we define more generally:
\begin{defi}
  Let $Y \subset \Hod^0(S, \VV)$ be a special (resp. weakly special)
  subvariety. An irreducible component of the intersection of some irreducible
  components of $\Phi_S^{-1}(Y)$ is called a special (resp. weakly
  special) intersection in $S$ for $\VV$.
\end{defi}

The following follows immediately from \Cref{rem2}:

\begin{lem} \label{inter}
  An irreducible component of an intersection of special (resp. weakly
  special) intersections for $\VV$ is a special (resp. weakly special)
  intersection for $\VV$.
\end{lem}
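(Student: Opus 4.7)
The plan is to deduce \Cref{inter} from \Cref{rem2} by transporting the intersection from $\Hod^0(S, \VV)$ down to $S$ along the period map $\Phi_S$. By a straightforward induction it suffices to treat the intersection of just two (weakly) special intersections $Z_1, Z_2$. Unpacking the definition of special intersection, for each $j = 1, 2$ there exist a (weakly) special subvariety $Y_j \subset \Hod^0(S, \VV)$ and irreducible components $C_{j,1}, \ldots, C_{j, n_j}$ of $\Phi_S^{-1}(Y_j)$ such that $Z_j$ is an irreducible component of $\bigcap_k C_{j,k}$; in particular $Z_j \subset C_{j,k}$ for every $k$.

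Let $W$ be an irreducible component of $Z_1 \cap Z_2$. Then $W \subset \Phi_S^{-1}(Y_1) \cap \Phi_S^{-1}(Y_2) = \Phi_S^{-1}(Y_1 \cap Y_2)$, and the irreducibility of $W$ forces $\Phi_S(W)$ to lie in a single irreducible component $Y^*$ of $Y_1 \cap Y_2$. By \Cref{rem2} applied in $\Hod^0(S, \VV)$, the subvariety $Y^*$ is (weakly) special. I then decompose $\Phi_S^{-1}(Y^*) = \bigcup_m E_m$ into irreducible components; each $E_m$ is a (weakly) special subvariety of $S$ for $\VV$ by \Cref{defiWS}. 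Since $E_m$ is irreducible and contained in $\Phi_S^{-1}(Y_j) = \bigcup_k C_{j,k}$, there is a unique index $k_j(m)$ with $E_m \subset C_{j,k_j(m)}$.

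Set $I := \{ m : E_m \subset C_{j,k} \text{ for all } j \in \{1,2\} \text{ and all } k \in \{1, \ldots, n_j\}\}$ and $F := \bigcap_{m \in I} E_m$. The bulk of the argument then amounts to verifying that $W$ is an irreducible component of $F$. Once this is granted, $F$ is by construction an intersection of some irreducible components of $\Phi_S^{-1}(Y^*)$ for the single (weakly) special subvariety $Y^*$, so $W$ is a (weakly) special intersection in $S$ and the lemma follows. The main obstacle I anticipate is precisely this maximality check, i.e.\ ruling out that $W$ sits as a proper analytic subvariety of some strictly larger component of $F$. This requires a careful comparison of the decomposition of $\Phi_S^{-1}(Y^*)$ into the $E_m$ with the coarser decomposition of $\Phi_S^{-1}(Y_j)$ into the $C_{j,k}$, together with exploiting the maximality of $W$ inside $Z_1 \cap Z_2$ to exclude the possibility that some larger irreducible component of $F$ would inject into $C_{1,k_0} \cap C_{1,k_0'}$ or $C_{2,l_0} \cap C_{2,l_0'}$ for distinct indices.
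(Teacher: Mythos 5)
Your overall strategy---push the intersection forward along $\Phi_S$ to $\Hod^0(S,\VV)$, invoke \Cref{rem2} there to produce a weakly special $Y^*$, then pull back to $S$---is the mechanism the paper has in mind when it says the lemma ``follows immediately'' from \Cref{rem2} (the paper gives no further detail). But the concrete construction you set up contains errors, and, as you yourself note, the decisive step is left unproved. The equality $\Phi_S^{-1}(Y_j)=\bigcup_k C_{j,k}$ is false in general: by the definition of a (weakly) special intersection the $C_{j,1},\ldots,C_{j,n_j}$ are only \emph{some} irreducible components of $\Phi_S^{-1}(Y_j)$, so an irreducible $E_m$ need not lie in any $C_{j,k}$, and when it does the index $k_j(m)$ need not be unique (nothing prevents $E_m\subset C_{j,k}\cap C_{j,k'}$). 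More seriously, the index set $I:=\{m:E_m\subset C_{j,k}\ \forall j,k\}$ is not the right object: it could well be empty (making $F$ the whole \'etal\'e space), and the condition on $E_m$ makes no reference to $W$, so the containment $W\subset F$ is not even clear. The natural candidate is rather $J:=\{m:W\subset E_m\}$, which is nonempty because $W$ is irreducible inside $\Phi_S^{-1}(Y^*)=\bigcup_m E_m$, and then $W\subset F:=\bigcap_{m\in J}E_m$ by construction.

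Even with that repair, the substantive point---that $W$ is an irreducible component of $F$---is precisely what you flag as the obstacle and then do not prove. This is the content of the lemma, not a bookkeeping check: the irreducible components of $\Phi_S^{-1}(Y^*)$ do not in general refine, or coincide with intersections of, the components $C_{j,k}$ of $\Phi_S^{-1}(Y_j)$, so one must actually argue that any irreducible analytic $W'\subset F$ with $W\subsetneq W'$ would have to lie in $Z_1\cap Z_2$, contradicting the maximality of $W$. This requires controlling the relation between the component decompositions of $\Phi_S^{-1}(Y^*)$, $\Phi_S^{-1}(Y_1)$ and $\Phi_S^{-1}(Y_2)$---exactly the part you defer. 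As it stands the proposal sets up plausible objects but does not establish the statement.
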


\subsubsection{Algebraicity of weakly special subvarieties of $S$}
The very definition of the Hodge locus $\HL(S, \VV^\otimes)$
implies that special subvarieties of $S$ for $\VV$ in the sense of
\Cref{defiWS} coincide with the ones defined in \Cref{special}. In
particular, in view of \Cref{CDK}, any special subvariety of $S$
(hence any special intersection in $S$) is
a closed irreducible algebraic subvariety of $S$. An alternative proof of \Cref{CDK} using o-minimal geometry was provided
in \cite[Theor. 1.6]{BKT}. The approach of \cite{BKT} gives immediately the following more
general algebraicity result, which is implicit in the discussion of \cite[Section
7]{klin}:

\begin{prop} \label{algebraicityWS}
 Any weakly special subvariety $Z$ for $\VV$ (hence also any
 weakly special intersection for $\VV$) is an algebraic subvariety of $S$. 
\end{prop}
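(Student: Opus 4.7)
The plan is to adapt the o-minimal GAGA strategy of \cite{BKT} used there to establish \Cref{CDK}. The two essential ingredients are: (i) the definability of the period map $\Phi_S$ in the o-minimal structure $\RR_{\an,\exp}$ once $\Hod^0(S,\VV)$ is endowed with its natural definable complex analytic structure, built from a Siegel-type fundamental set for the action of $\Gamma$ on $\cD^+$; and (ii) the Peterzil--Starchenko definable Chow theorem, which states that a closed complex analytic subvariety of a complex algebraic variety which is definable in an o-minimal structure is necessarily algebraic.

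First, I would equip the connected Hodge variety $\Hod^0(S,\VV) = \Gamma \backslash \cD^+$ with the natural definable complex analytic structure of \cite{BKT}, so that $\Phi_S: S \lo \Hod^0(S,\VV)$ becomes a definable holomorphic map between definable complex analytic spaces (the definable structure on $S$ being that of any quasi-projective variety).

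Second, I would show that every weakly special subvariety $Y \subset \Hod^0(S,\VV)$ is a closed definable complex analytic subvariety of $\Hod^0(S,\VV)$. Writing $Y = \varphi(T_1 \times \{t_2\})$ for some Hodge morphism $\varphi: T_1 \times T_2 \lo \Hod^0(S,\VV)$ and some point $t_2 \in T_2$, the crucial point is that $\varphi$ descends from a $\QQ$-algebraic morphism between the underlying period domains which maps Siegel sets to (finite unions of) Siegel sets; therefore $\varphi$ is itself definable with respect to the natural definable analytic structures on $T_1 \times T_2$ and $\Hod^0(S,\VV)$. Since $T_1 \times \{t_2\}$ is an obviously definable closed analytic subvariety of $T_1 \times T_2$, its image $Y = \varphi(T_1 \times \{t_2\})$ is a definable closed complex analytic subvariety of $\Hod^0(S,\VV)$.

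Third, the preimage $\Phi_S^{-1}(Y) \subset S$ is then a closed definable complex analytic subvariety of the quasi-projective variety $S$. By Peterzil--Starchenko it is algebraic; in particular each of its irreducible components is algebraic, proving the assertion for weakly special subvarieties. The assertion for weakly special intersections follows by applying the same argument to finite intersections $\bigcap_j \Phi_S^{-1}(Y_j)$, whose irreducible components remain definable closed analytic subvarieties of $S$ and hence algebraic.

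The step I expect to be the main obstacle is the second one, namely verifying definability of the Hodge morphism $\varphi$ with respect to the natural definable structures. This is not deep, but requires carefully unpacking the Siegel-set construction of the definable structure on period domains from \cite{BKT} and checking its functoriality under morphisms of Hodge data. Once this functoriality is recorded, the remainder of the argument is a formal application of definable Chow, strictly parallel to the way \cite{BKT} treats integral Hodge classes.
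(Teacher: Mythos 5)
Your proposal is correct and follows the same strategy as the paper: definability of the arithmetic quotient and of the period map, definability of the weakly special subvariety $Y$ inside $\Hod^0(S,\VV)$, then the Peterzil--Starchenko definable Chow theorem. The step you single out as the main obstacle --- functoriality of the Siegel-set definable structure under Hodge morphisms, so that $\varphi(T_1\times\{t_2\})$ is definable --- is exactly the content of \cite[Theor.~1.1(2)]{BKT}, which the paper simply cites, so no further work is needed there.
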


\begin{proof}
  The proof is strictly analogous to the proof of \cite[Theor. 1.6]{BKT}.
  By \cite[Theor. 1.1(1)]{BKT} the Hodge variety 
  $\Hod^0(S, \VV)= \Gamma \backslash \cD^+$ is an arithmetic quotient endowed with a natural
  structure of $\RR_\alg$-definable manifold. By \cite[Theor. 1.3]{BKT} the period map $\Phi_S$ is $\RR_{\an,
    \exp}$-definable with respect to the natural $\RR_\alg$-structures
  on $S$ and $\Hod^0(S, \VV)$. Let $Y$ be the unique weakly special subvariety of $\Hod^0(S,
  \VV)$ such that $Z$ is an irreducible component of $\Phi_S^{-1}(Y)$. By \cite[Theor. 1.1(2)]{BKT}  $Y$ is an
  $\RR_\alg$-definable subvariety of $\Hod^0(S, \VV)$; hence its preimage $\Phi^{-1}_S(Y)$ is an $\RR_{\an,
    \exp}$-definable subvariety of $S$. By the definable Chow theorem of
  Peterzil and Starchenko \cite[Theor. 4.4 and Corollary 4.5]{PS}, the
  complex analytic $\RR_{\an,\exp}$-definable subvariety of the complex algebraic
  variety $S$ is necessarily an algebraic subvariety of $S$.
  Hence its irreducible complex analytic component $Z$ too.
\end{proof}

\subsubsection{Special and weakly special closure}

One deduces immediately from \Cref{inter} the following
\begin{cor} \label{wsclosure}
  Any irreducible algebraic subvariety $i: W \hookrightarrow S$ is contained in a
  smallest weakly special (resp. special) intersection $\langle W \rangle_{\ws}$ 
  (resp. $\langle W \rangle_{\textnormal{s}}$) of $S$ for $\VV$,
  called the weakly special (resp. special) closure of $W$ in $S$ for $\VV$.
  
\end{cor}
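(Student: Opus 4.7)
The plan is to show existence via a standard Noetherian descending-chain argument and uniqueness via \Cref{inter} combined with the irreducibility of $W$.

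First I would check that the family
$\mathcal{F}_{\ws}(W)$ (resp.\ $\mathcal{F}_s(W)$) of weakly special (resp.\ special) intersections of $S$ for $\VV$ containing $W$ is non-empty. For this, observe that $\Hod^0(S,\VV)$ is itself a (weakly) special subvariety of itself (via the identity Hodge morphism, using the trivial factor $T_2 = \{*\}$ in \Cref{defiWSperiod}(2)), so $\Phi_S^{-1}(\Hod^0(S,\VV)) = S$ is a (weakly) special subvariety of $S$ for $\VV$ and hence lies in $\mathcal{F}_{\ws}(W)$ (and in $\mathcal{F}_s(W)$).

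Next, by \Cref{algebraicityWS} every element of $\mathcal{F}_{\ws}(W)$ is an algebraic subvariety of $S$; since the Zariski topology on $S$ is Noetherian, this partially ordered family admits at least one minimal element $W_0$. The main claim is that $W_0$ is in fact the \emph{smallest} element, not merely minimal. To see this, let $W' \in \mathcal{F}_{\ws}(W)$ be arbitrary and consider the algebraic intersection $W_0 \cap W'$, which contains $W$. Since $W$ is irreducible, it is contained in a single irreducible component $Z$ of $W_0 \cap W'$. By \Cref{inter}, $Z$ is again a weakly special intersection of $S$ for $\VV$, so $Z \in \mathcal{F}_{\ws}(W)$; minimality of $W_0$ forces $Z = W_0$, whence $W_0 \subset W'$. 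Setting $\langle W\rangle_{\ws} := W_0$ gives the desired weakly special closure, and the argument for the special case is identical, replacing ``weakly special'' by ``special'' throughout.

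The one conceptual point requiring care, and essentially the only reason the proof needs any content at all, is the passage from merely minimal to minimum. This is exactly what \Cref{inter} delivers: without the stability of weakly special intersections under taking irreducible components of their intersections, two different minimal elements could a priori coexist. The irreducibility of $W$ is the auxiliary ingredient that places $W$ inside a \emph{single} component of $W_0 \cap W'$, allowing \Cref{inter} to be applied. No further subtlety is expected, and the argument for the ``special'' version is literally the same after substituting terminology.
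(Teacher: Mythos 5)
Your proof is correct and is essentially the argument the paper has in mind when it writes that the corollary ``follows immediately from Lemma~\ref{inter}'': one must use Noetherianity and algebraicity (Proposition~\ref{algebraicityWS}) to produce a minimal element, and then Lemma~\ref{inter} together with the irreducibility of $W$ to upgrade minimal to minimum. You have correctly identified the two places where care is needed — non-emptiness of the family, and the minimal-to-minimum step — so no gaps.
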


\begin{rem}
  Obviously $ W \subset \langle W \rangle_{\ws}\subset \langle W
  \rangle_{\textnormal{s}}\;\;.$
\end{rem}

The geometric description of $\langle W \rangle_{\textnormal{s}}$ is
easy. Let $(\G_W, \cD_W) \subset
  (\G, \cD)$ be the generic Hodge datum of the restriction of $\VV$ to
  the smooth locus of $W$. This induces a Hodge morphism of connected Hodge
  varieties $\varphi: \Gamma_W \backslash \cD_W^+ \to \Gamma \backslash
  \cD^+$, where $\Gamma_W:= \Gamma \cap \G_W(\QQ)$. The restriction
  of the period map $\Phi_S$ to the smooth locus of $W$ factorizes
  through the special subvariety $\varphi(\Gamma_W \backslash
  \cD_W^+)$ of $\Gamma \backslash \cD^+$ and we obtain:

  \begin{lem} \label{lem1}
  The special closure $\langle W \rangle_{\textnormal{s}}$ is the
  unique irreducible component of intersections of components of $\Phi_S^{-1} (\varphi(\Gamma_W
  \backslash \cD_W^+))$ containing $W$.
\end{lem}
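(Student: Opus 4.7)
The plan is to identify $\langle W \rangle_{\textnormal{s}}$ with the unique irreducible component $Z_W$ of $\Phi_S^{-1}(Y)$ containing $W$, where $Y := \varphi(\Gamma_W \backslash \cD_W^+)$. First, $Y$ is a special subvariety of $\Hod^0(S,\VV)$ by \Cref{defiWSperiod}(1), being the image of the Hodge morphism $\varphi$. By construction of the generic Hodge datum $(\G_W,\cD_W)$ of $(W^0,\VV_{|W^0})$, the restriction $\Phi_S|_{W^0}$ factors through $Y$, so $W \subset \Phi_S^{-1}(Y)$. Irreducibility of $W$ then forces it into a unique component $Z_W$ of $\Phi_S^{-1}(Y)$; by \Cref{defiWS}, $Z_W$ is a special subvariety of $S$ for $\VV$, hence a special intersection containing $W$, so $\langle W \rangle_{\textnormal{s}} \subset Z_W$.

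For the reverse inclusion, I would take an arbitrary special intersection $T$ containing $W$. By definition there exist special subvarieties $Y_1,\ldots,Y_k \subset \Hod^0(S,\VV)$ and irreducible components $Z_j$ of $\Phi_S^{-1}(Y_j)$ such that $T$ is the unique component of $\bigcap_j Z_j$ containing $W$. Granting the minimality claim below, $Y_j \supset Y$ for every $j$, so $\Phi_S^{-1}(Y) \subset \Phi_S^{-1}(Y_j)$, and the irreducible subvariety $Z_W$ lies in a unique component of $\Phi_S^{-1}(Y_j)$; since $W \subset Z_W \cap Z_j$, that component must be $Z_j$. Hence $Z_W \subset \bigcap_j Z_j$, and a final irreducibility argument places $Z_W$ inside the unique component of this intersection containing $W$, namely $T$. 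Specialising $T = \langle W \rangle_{\textnormal{s}}$ yields $Z_W \subset \langle W \rangle_{\textnormal{s}}$.

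The main obstacle is the minimality claim: that $Y$ is the smallest special subvariety of $\Hod^0(S,\VV)$ containing $\Phi_S(W^0)$. This is the Hodge-theoretic counterpart to the fact that the Mumford-Tate group is the smallest $\QQ$-algebraic subgroup through which the relevant representation factors. Concretely, any special subvariety $Y' \subset \Hod^0(S,\VV)$ is, by \Cref{defiWSperiod}(1), the image of a Hodge morphism, which arises from a morphism of connected Hodge data by \cite[Lemma 3.9]{klin}; factoring through its image, we may assume $Y'$ is the image of an injection $(\G',\cD'^+) \hookrightarrow (\G,\cD^+)$. The Hodge-generic points of $Y'$ then have Mumford-Tate group a $\G(\QQ)$-conjugate of $\G'$. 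If $\Phi_S(W^0) \subset Y'$, a Hodge-generic $w \in W^0$ satisfies $\MT(\VV_w) \subset \G'$ up to conjugation; but $\MT(\VV_w) = \G_W$ by the very definition of the generic Hodge datum of $(W^0,\VV_{|W^0})$. Hence $(\G_W,\cD_W^+) \subset (\G',\cD'^+)$ up to conjugation, and therefore $Y = \varphi(\Gamma_W \backslash \cD_W^+) \subset Y'$, as required.
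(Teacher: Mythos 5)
Your overall strategy is the right one and matches the paper's implicit argument: show that $\Phi_S|_{W^0}$ factors through $Y := \varphi(\Gamma_W\backslash\cD_W^+)$, and that $Y$ is the \emph{smallest} special subvariety of $\Hod^0(S,\VV)$ containing $\Phi_S(W)$. Your proof of that minimality via the Mumford--Tate group of a Hodge-generic point of $W^0$ is exactly the content one needs, and is correct in spirit.

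However, there is a genuine gap in the way you pass from the Hodge variety back to $S$. You assert that ``irreducibility of $W$ then forces it into a \emph{unique} component $Z_W$ of $\Phi_S^{-1}(Y)$.'' Irreducibility only forces $W$ into \emph{some} component; $W$ can perfectly well lie in several components $Z_1, Z_2$ of $\Phi_S^{-1}(Y)$ simultaneously (namely in $Z_1\cap Z_2$). This possibility is precisely what the paper's notion of \emph{special intersection} (the discussion after \Cref{defiWS}) is designed to handle, and is why the lemma's statement speaks of ``intersections of components'' rather than a single component: $\langle W\rangle_{\textnormal s}$ is the component through $W$ of the \emph{intersection} of all components of $\Phi_S^{-1}(Y)$ containing $W$, not a single component $Z_W$. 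In general $\langle W\rangle_{\textnormal s}\subsetneq Z_W$, so your identification $\langle W\rangle_{\textnormal s}=Z_W$ cannot hold.

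The same error recurs in the reverse inclusion. From $Z_W\subset\Phi_S^{-1}(Y_j)$ you conclude that ``$Z_W$ lies in a unique component of $\Phi_S^{-1}(Y_j)$; since $W\subset Z_W\cap Z_j$, that component must be $Z_j$.'' Again $Z_W$ may lie in several components of $\Phi_S^{-1}(Y_j)$, and one of them containing $W$ need not be $Z_j$; so the step $Z_W\subset Z_j$, and hence $Z_W\subset\bigcap_j Z_j$, does not follow. To repair the argument, replace $Z_W$ throughout by the unique irreducible component through $W$ of $\bigcap_{W\subset Z} Z$ (the intersection taken over all components $Z$ of $\Phi_S^{-1}(Y)$ containing $W$), and then show this object is contained in any special intersection through $W$; the minimality of $Y$, which you established correctly, is still the key input, but the bookkeeping of components needs to be done at the level of special intersections rather than special subvarieties.
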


The description of the weakly special closure $\langle W
\rangle_{\ws}$ is a bit more involved but similar to the one obtained by Moonen \cite[Section
3]{Moo} in the case of Shimura varieties.
Let $n:W^\norm \to W$ be the normalisation of $W$. Let
  $$\Phi_{W^{\norm}} : W^{\norm} \to \Gamma_W \backslash
  \cD_W^+$$ be the period map for $n^*\VV$. Hence we have a commutative diagram
  $$
  \xymatrix{
    W^{\norm} \ar[d]_n \ar[r]^>>>>>{\Phi_{W^{\norm}}} & \Gamma_W \backslash
\cD_W^+\ar[d]^\varphi \\
W   \ar[r]_>>>>>>{{\Phi_S}_{|W}} & \Gamma \backslash \cD^+ \;\;.}$$

Let $\bH_W$ be the algebraic monodromy group of $W$ for $\VV$. Thus $\bH_W$ is the identity component of
  the Zariski-closure of $(\Phi_{S} \circ n)_*(\pi_1(W^{\norm}))
  \subset \Gamma$ in $\GL(V)$. As $W^\norm$ is normal the open
  immersion $j: W^{\norm, 0} \hookrightarrow W^\norm$ of the smooth
  locus $W^{\norm, 0}$ of $W^\norm$ defines a surjection $j_*:
  \pi_1(W^{\norm, 0}) \to \pi_1(W)$. In particular $\bH_W$ is also the
  algebraic monodromy group of the restriction of $n^*\VV_\ZZ$ to
  $W^{\norm, 0}$. It thus follows from \cite[Theor.1]{An92}
  that $\bH_W$ is a normal subgroup of the derived group $\G_W^\der$. As $\G_W$ is reductive
  there exists a normal subgroup $\G'_W \subset \G_W$ such that $\G_W$ is
  an almost direct product of $\bH_W$ and $\G'_W$. In this way we
  obtain a decomposition of the adjoint Hodge datum $(\G_W^\ad, \cD_W^+)$ into a product
$$ (\G_{W}^\ad, \cD_W^+) = (\bH_{W}^\ad, \cD_{\bH_{W}}^+) \times ({\G'}_{W}^\ad,
\cD_{\G'_{W}}^+)\;\;,$$
inducing a decomposition of connected Hodge varieties $$ \Gamma_W
\backslash \cD_W^+= \Gamma_{\bH_{W}} \backslash \cD_{\bH_{W}}^+ 
\times \Gamma_{\G'_{W}} \backslash \cD_{\G'_{W}}^+\;\;.$$

\begin{lem} \label{proj}
  The projection of $\Phi_{W^{\norm}}(W^{\norm}) \subset
  \Gamma_W \backslash \cD_W^+$ on $\Gamma_{\G'_{W}} \backslash \cD_{\G'_{W}}^+$ is a single point $\{t'\}$.
\end{lem}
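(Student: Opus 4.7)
The plan is to show that the composition of $\Phi_{W^{\norm, 0}}$ with the second projection
$\pi': \Gamma_W \backslash \cD_W^+ \lo \Gamma_{\G'_W} \backslash \cD_{\G'_W}^+$
is constant. The underlying mechanism is that $\bH_W$ is by construction the algebraic monodromy of $n^*\VV$ on $W^{\norm, 0}$, and under the product decomposition of adjoint Hodge data it lies entirely in the first factor; the projected period map therefore has virtually trivial monodromy, and rigidity for $\ZZ$VHS with trivial monodromy will force it to be constant.

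The first step is to analyze the monodromy of $\Phi' := \pi' \circ \Phi_{W^{\norm, 0}}$. By definition of $\bH_W$, the Zariski closure of the image $(\Phi_S \circ n)_*(\pi_1(W^{\norm, 0})) \subset \Gamma_W$ has identity component $\bH_W$, so a finite-index subgroup of this image is contained in $\bH_W(\QQ)$. Since $(\bH_W^\ad, \cD_{\bH_W}^+)$ is a direct factor of $(\G_W^\ad, \cD_W^+)$, the subgroup $\bH_W$ lies in the kernel of the adjoint projection $\G_W \lo {\G'_W}^\ad$, hence the image of the full monodromy of $\Phi'$ inside $\Gamma_{\G'_W}$ is finite. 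Passing to a finite \'etale cover $c: W' \lo W^{\norm, 0}$ corresponding to a normal finite-index subgroup of $\pi_1(W^{\norm, 0})$, the map $\Phi' \circ c$ has trivial monodromy and admits a lift to a horizontal holomorphic map $\Psi: W' \lo \cD_{\G'_W}^+$.

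The second step is to invoke rigidity. The map $\Psi$ is the period map of a polarized $\QQ$VHS on the smooth connected quasi-projective variety $W'$, obtained from $c^* n^* \VV$ by applying any faithful rational representation of ${\G'_W}^\ad$ after the adjoint quotient onto the ${\G'_W}^\ad$-factor, and its monodromy is trivial by construction. By Deligne's theorem of the fixed part, a polarized $\QQ$VHS on a smooth connected quasi-projective variety with trivial monodromy is constant; consequently $\Psi$ is a constant map to some $\tilde{t}' \in \cD_{\G'_W}^+$, and since $c$ is surjective, $\Phi'$ itself takes the single value $t' \in \Gamma_{\G'_W} \backslash \cD_{\G'_W}^+$ given by the image of $\tilde{t}'$, as required. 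The subtle point I anticipate is the honest construction on $W'$ of a polarized $\QQ$VHS whose period map is $\Psi$, starting from the abstract product decomposition of adjoint Hodge data; this is formal but requires careful bookkeeping of rational structures and polarizations in order to legitimately invoke the theorem of the fixed part.
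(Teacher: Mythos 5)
Your proof is correct and follows essentially the same strategy as the paper: observe that the monodromy of the projected period map is finite (because the algebraic monodromy group $\bH_W$ projects trivially to ${\G'_W}^\ad$), realize the projected period map as the period map of an auxiliary VHS via a faithful rational representation of ${\G'_W}^\ad$, pass to a finite \'etale cover, and invoke the theorem of the fixed part. The "subtle point" you flag at the end is exactly the step the paper handles by fixing a faithful representation $\rho:{\G'_W}^\ad\to\GL(H)$ and a lattice $H_\ZZ$ with $\rho(\Gamma_{\G'_W})\subset\GL(H_\ZZ)$ before passing to the cover; your reordering (cover first, then representation) is an inessential variation.
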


\begin{proof}
When $\Gamma \backslash \cD^+$ is a connected Shimura variety this
  is proven in \cite[Prop. 3.7]{Moo}. Moonen's argument does not extend
  to our more general situation: he uses that $\cD^+$ is a
  bounded domain in some $\CC^N$ in the Shimura case, which is not true for a general flag
  domain $\cD^+$. Instead we argue as follows. Choose any faithful linear representation $\rho: 
  {\G'}_{W}^\ad \to \GL(H)$ and a $\ZZ$-structure $H_\ZZ$ on the $\QQ$-vector
  space $H$ such that $\rho(\Gamma_{\G'_{W}}) \subset \GL(H_\ZZ)$. The $\ZZ$-local system on $W^{\norm}$ with
  monodromy representation
  $$ \lambda: \pi_1(W^{\norm}) \stackrel{(\Phi_{W^{\norm}})_*}{\lo}
  \Gamma_W \stackrel{{p_2}_*}{\lo} \Gamma_{\G'_{W}} \stackrel{\rho}{\lo} \GL(H_\ZZ)$$
  is a $\ZZ$VHS with period map
  $$ W^{\norm} \stackrel{\Phi_{W^{\norm, 0}}}{\lo} \Gamma_W
\backslash \cD_W^+ \stackrel{p_2}{\lo} \Gamma_{\G'^\ad}
    \backslash \cD_{\G'}^+\;\;.$$ By the very definition of the algebraic
    monodromy group $\bH_W$ the group $\lambda(\pi_1(W^{\norm}))
    \subset \GL(H_\ZZ)$ is finite. Applying the theorem of the fixed
    part (see \cite[Cor. 7.23]{Schmid}) to the corresponding \'etale
    cover of $W^{\norm}$ we deduce that the period map $p_2 \circ
    \Phi_{W^{\norm}}$ is constant.
  \end{proof}
  
\Cref{proj} implies that $W$ is contained in $\Phi_S^{-1}(\varphi((\Gamma_{\bH_{W}} \backslash \cD_{\bH_{W}}^+) \times
  \{t'\}))$. Conversely, as any irreducible component of an
  intersection of weakly special subvarieties of $\Gamma_W \backslash
  \cD_W^+$ is still weakly special, one easily checks that any weakly special
  subvariety $Y:= \psi(T_1 \times \{t_2\}) \subset \Gamma_W \backslash
  \cD_W^+$ containing $\Phi_{W^{\norm, 0}}(W^{\norm, 0})$ has to
  contain $(\Gamma_{\bH_{W}} \backslash \cD_{\bH_{W}}^+) \times \{t'\}$. Thus:
\begin{prop} \label{ws closure}
The weakly special closure $\langle W \rangle_{\ws}$ of $W$ is the
unique irreducible component of the intersection of components of
 $\Phi_S^{-1}(\varphi((\Gamma_{\bH_{W}} \backslash \cD_{\bH_{W}}^+) \times
  \{t'\}))$ containing $W$.
\end{prop}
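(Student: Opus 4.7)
The plan is to prove both inclusions of the asserted equality. Set $U := \varphi((\Gamma_{\bH_W} \backslash \cD_{\bH_W}^+) \times \{t'\})$; being the image of one factor at a chosen point of the other under the Hodge morphism obtained by postcomposing the product decomposition with $\varphi$, $U$ is weakly special in $\Hod^0(S,\VV)$ in the sense of \Cref{defiWSperiod}(2). Let $Z$ be the unique irreducible component, of the intersection of those components of $\Phi_S^{-1}(U)$ containing $W$, which itself contains $W$; this $Z$ is a weakly special intersection in $S$ for $\VV$, and by \Cref{inter} so is $\langle W\rangle_{\ws}$.

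For the inclusion $\langle W\rangle_{\ws}\subset Z$, I would invoke \Cref{proj} to conclude that $\Phi_{W^{\norm,0}}(W^{\norm,0})$ is contained in $(\Gamma_{\bH_W}\backslash\cD_{\bH_W}^+)\times\{t'\}$, and then apply the commutative square preceding \Cref{proj} to obtain $\Phi_S(W)\subset U$. Hence $W\subset \Phi_S^{-1}(U)$, so $Z$ is a weakly special intersection containing $W$, and the inclusion follows.

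For the reverse inclusion $Z\subset \langle W\rangle_{\ws}$, it suffices to show that any weakly special subvariety $Y\subset \Hod^0(S,\VV)$ containing $\Phi_S(W)$ also contains $U$; then $\Phi_S^{-1}(Y)\supset \Phi_S^{-1}(U)$ and the irreducible component of the former intersection containing $W$ contains $Z$. Because $\G_W$ is, by definition, the generic Mumford-Tate group of $W$, the smallest special subvariety of $\Hod^0(S,\VV)$ through $\Phi_S(W)$ is $\varphi(\Gamma_W\backslash\cD_W^+)$, so the problem reduces to showing that every weakly special $Y'=\psi(T_1\times\{t_2\})\subset \Gamma_W\backslash\cD_W^+$ containing $\Phi_{W^{\norm,0}}(W^{\norm,0})$ contains $(\Gamma_{\bH_W}\backslash\cD_{\bH_W}^+)\times\{t'\}$. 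Two inputs give this: by the genericity of $\G_W$ one first argues $\psi(T_1\times T_2)=\Gamma_W\backslash\cD_W^+$; and by the definition of $\bH_W$ as algebraic monodromy group, combined with Andr\'e's normality theorem (used in the paragraph preceding \Cref{proj}) and the theorem of the fixed part applied to any factor of $T_1$ complementary to $\bH_W^\ad$, the factor $T_1$ must dominate the $\bH_W^\ad$-piece of the adjoint decomposition $\G_W^\ad = \bH_W^\ad\times {\G'}_W^\ad$. \Cref{proj} then forces $t_2$ to match $t'$ on the complementary factor, yielding the containment.

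The main obstacle is this last structural step: cleanly identifying the minimal weakly special subvariety of $\Gamma_W\backslash\cD_W^+$ through $\Phi_{W^{\norm,0}}(W^{\norm,0})$ with the $\bH_W^\ad$-slice at $t'$. Ruling out smaller product decompositions of $\psi:T_1\times T_2\lo \Gamma_W\backslash\cD_W^+$ accommodating the monodromy of $\Phi_{W^{\norm,0}}$ requires a careful Lie-algebraic comparison of normal subgroups of $\G_W^\der$ containing $\bH_W$; once this is in place the rest is a diagram chase using the commutative square above \Cref{proj}.
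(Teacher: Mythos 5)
Your proposal follows essentially the same route as the paper. Both directions are argued identically: $\langle W\rangle_{\ws}\subset Z$ follows from \Cref{proj} together with the commutative square, exactly as the paper does; and the reverse inclusion is reduced to the key claim that any weakly special $\psi(T_1\times\{t_2\})\subset\Gamma_W\backslash\cD_W^+$ containing $\Phi_{W^{\norm,0}}(W^{\norm,0})$ must contain $(\Gamma_{\bH_W}\backslash\cD_{\bH_W}^+)\times\{t'\}$, which is precisely the assertion the paper dispatches with ``one easily checks.'' You are in fact more explicit than the paper about the ingredients needed for that verification (Andr\'e normality, the theorem of the fixed part on the complementary factor, and the Lie-algebraic minimality of the $\bH_W^\ad$-slice), and honest about the structural step being the nontrivial part of the argument; the paper leaves all of that implicit. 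The one place where your write-up is as loose as the paper's is the final passage from ``every weakly special $Y\supset U$'' back to the containment $Z\subset\langle W\rangle_{\ws}$ at the level of components of preimages --- this requires a short diagram chase with the definition of weakly special intersection that neither you nor the paper spells out.
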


It then follows immediately:
  
\begin{cor} \label{equi}
  The weakly special subvarieties of $S$ for $\VV$ (see \Cref{defiWS})
are precisely the closed irreducible
algebraic subvarieties $Y \subset S$ maximal among the closed
irreducible algebraic subvarieties $Z$ of $S$ whose algebraic
monodromy group $\HH_Z$ with respect to $\VV$ equals $\HH_Y$.
  
\end{cor}

\begin{rem}
  The reader will notice that the characterisation
  of the weakly special subvarieties given above is strictly analogous
  to the characterisation \Cref{special} of the special subvarieties,
  replacing the generic Mumford-Tate group by the algebraic monodromy
  group.
  \end{rem}

\subsection{Bi-algebraic geometry for $(S, \VV)$}

Let us start by recalling the general functional
transcendence context of ``bi-algebraic geometry'' (see
\cite{KUY}, \cite[Section 7]{klin}):

\begin{defi} \label{bialg}
A bi-algebraic structure on a connected complex algebraic variety $S$
is a pair 
$$ (D: \ti{S} \to X, \quad \rho: \pi_1(S) \to \Aut(X))\;\;$$ 
where $\pi:\ti{S} \to S$ denotes the universal cover of $S$, $X$ is a complex
algebraic variety, $\Aut(X)$ its group of algebraic
automorphisms, $\rho: \pi_1(S) \to \Aut(X)$ is a group morphism
(called the holonomy representation) and 
$D$ is a $\rho$-equivariant holomorphic map (called the developing map).
\end{defi}

The datum of a bi-algebraic structure on $S$ tries to emulate an
algebraic structure on the universal cover $\tilde{S}$ of $S$:
\begin{defi} \label{bialgebraic}
Let $S$ be a connected complex algebraic variety endowed with a
bi-algebraic structure $(D, \rho)$.
\begin{itemize}
\item[(i)]
An irreducible analytic subvariety $Z \subset \ti{S}$ is said to be a closed
irreducible algebraic subvariety of $\ti{S}$ if $Z$ is an analytic
irreducible component of $D^{-1}(\overline{D(Z)}^\Zar)$ (where
$\overline{D(Z)}^\Zar$ denotes the Zariski-closure of $D(Z)$ in $X$).
\item[(ii)]
A closed irreducible algebraic subvariety $Z\subset \ti{S}$, resp. $W
\subset S$, is said to be {\em bi-algebraic} if $\pi(Z)$ is a closed
algebraic subvariety of $S$, resp. any (equivalently one) analytic 
irreducible component of $\pi^{-1}(W)$ is a closed
irreducible algebraic subvariety of $\ti{S}$. 
\end{itemize}
\end{defi}

\begin{rem}
  As in \Cref{wsIntersection} an irreducible component of an
  intersection of closed algebraic subvarieties of $\ti{S}$ is not
  necessarily algebraic in the sense above, as the map $D$ is not assumed to be
  injective. Let us call such an irreducible component an {\em
    algebraic intersection} in $\ti{S}$. An algebraic intersection
  $Z\subset \ti{S}$, resp. a closed irreducible algebraic subvariety $W
\subset S$, is called a {\em bi-algebraic intersection} if $\pi(Z)$ is
a closed 
algebraic subvariety of $S$, resp. any (equivalently one) analytic 
irreducible component of $\pi^{-1}(W)$ is an algebraic intersection in
$\ti{S}$.
\end{rem}

Let $\VV$ be a polarized $\ZZ$VHS on $S$. It canonically defines a
bi-algebraic structure on $S$ as follows. Let $$\hat{\Phi}_S : \ti{S} \to
\hat{\cD}$$ be the composite $j \circ \tilde{\Phi}_S$ where $j: \cD
\hookrightarrow \hat{\cD}$ denotes the open embedding of the
Mumford-Tate domain $\cD$ in its compact dual $\hat{\cD}$, which is an
algebraic flag variety for $\G(\CC)$ (see \cite[Section 3.1]{klin}).

\begin{defi} \label{bi-alg def}
Let $p: \VV \to S$ be a polarized $\ZZ$VHS on a 
quasi-projective complex manifold $S$.
The bi-algebraic structure
on $S$ defined by $\VV$ is the pair $(\hat{\Phi}_S: \ti{S} \to
\hat{\cD}, \rho_S:= (\Phi_S)_*: \pi_1(S) \to \Ga \subset \G(\CC))$.
\end{defi}

The following proposition, stated in \cite[Prop. 7.4]{klin} without
proof, characterizes the weekly special subvarieties of $S$ for $\VV$ in
bi-algebraic terms. It was  proven by Ullmo-Yafaev \cite{UY1}
in the case where $S$ is a Shimura variety, and in some special cases
by Friedman and Laza \cite{FL15}.

\begin{prop} \label{bi-alg}
Let $(S, \VV)$ be a $\ZZ$VHS. The weakly special subvarieties (resp. the weakly special
intersections) of $S$ for $\VV$ are
the bi-algebraic subvarieties (resp. the bi-algebraic intersections) of $S$
for the bi-algebraic structure on $S$ defined by $\VV$.
\end{prop}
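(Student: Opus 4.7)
The plan is to prove both implications of the equivalence and to reduce the intersection variant to the subvariety case via \Cref{inter}. The bridge between the two notions is the following common algebraic target in $\hat{\cD}$: given an irreducible algebraic $W \subset S$ with generic Hodge datum $(\G_W, \cD_W)$ and algebraic monodromy group $\bH_W$, and using the product decomposition $\G_W^\ad = \bH_W^\ad \times {\G'}_W^\ad$ together with the point $t'$ of \Cref{proj}, set
$$Z_W := \hat{\cD}_{\bH_W} \times \{\hat{t}'\} \subset \hat{\cD},$$
a single $\bH_{W,\CC}$-orbit on the flag variety $\hat{\cD}$, hence a closed algebraic subvariety. I will show that both ``$W$ is weakly special'' (via \Cref{ws closure}) and ``$W$ is bi-algebraic'' (via \Cref{bialgebraic}) become, for a chosen component $\tilde W$ of $\pi^{-1}(W)$, the combined statement that $\tilde W$ is an analytic component of $\hat{\Phi}_S^{-1}(Z_W)$ and that $\overline{\hat{\Phi}_S(\tilde W)}^\Zar = Z_W$.

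For the implication weakly special $\Rightarrow$ bi-algebraic, let $W$ be weakly special; it is algebraic by \Cref{algebraicityWS}, and since $W$ is itself weakly special one has $W = \langle W \rangle_\ws$, so \Cref{ws closure} exhibits $W$ as an analytic component of $\Phi_S^{-1}(\varphi((\Gamma_{\bH_W} \backslash \cD_{\bH_W}^+) \times \{t'\}))$. Lifting to $\tilde S$ and post-composing with $j \colon \cD \into \hat{\cD}$ turns this into the statement that $\tilde W$ is an analytic component of $\hat{\Phi}_S^{-1}(Z_W)$ (one uses that $\cD_{\bH_W}^+$ is open-and-closed in $\cD_{\bH_W} = j^{-1}(\hat{\cD}_{\bH_W}) \cap \cD$). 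The equality $\overline{\hat{\Phi}_S(\tilde W)}^\Zar = Z_W$ then follows from the monodromy-invariance argument described below.

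For the converse, assume $W$ is bi-algebraic and put $Z := \overline{\hat{\Phi}_S(\tilde W)}^\Zar$, so that by \Cref{bialgebraic} $\tilde W$ is an analytic component of $\hat{\Phi}_S^{-1}(Z)$. The inclusion $Z \subset Z_W$ is exactly the content of \Cref{proj} applied to the normalization $W^{\norm, 0}$: the period map has constant projection on the ${\G'}_W^\ad$ factor, so $\hat{\Phi}_S(\tilde W)$ lies in $Z_W$. The opposite inclusion $Z \supset Z_W$ uses the monodromy-invariance of $Z$ explained next, combined with the fact that $Z$ meets $Z_W$ and that $\bH_W(\CC)$ acts transitively on $Z_W$. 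Hence $Z = Z_W$, and by \Cref{ws closure} $W$ is an analytic component of $\Phi_S^{-1}(\varphi((\Gamma_{\bH_W} \backslash \cD_{\bH_W}^+) \times \{t'\}))$, i.e.\ weakly special.

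The common core, used in both directions, is the $\bH_W(\CC)$-invariance of $\overline{\hat{\Phi}_S(\tilde W)}^\Zar$: the stabilizer of $\tilde W$ in $\pi_1(S)$ contains the image of $\pi_1(W^{\norm, 0})$, so for each $\gamma$ in this image the automorphism $\rho(\gamma) \in \Aut(\hat{\cD})$ preserves $\hat{\Phi}_S(\tilde W)$; thus the Zariski closure is preserved by $\rho(\pi_1(W^{\norm, 0}))$ and therefore by its identity Zariski-closure $\bH_W(\CC)$. The main obstacle is precisely the two-sided identification $Z = Z_W$: the ``$\subset$'' direction rests on the rigidity output of \Cref{proj} (constancy of the period on the $\G'_W$ factor), while the ``$\supset$'' direction rests on this monodromy-invariance saturation, together with the homogeneity of $Z_W$ as a $\bH_{W,\CC}$-orbit. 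Finally the intersection variant follows at once from \Cref{inter} combined with the parallel fact that a component of an intersection of bi-algebraic subvarieties is a bi-algebraic intersection by definition.
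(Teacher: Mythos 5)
Your proof is correct and follows essentially the same route as the paper: reduce to the subvariety case, and argue both implications through the common algebraic target in $\hat{\cD}$ cut out by $\bH_W$ (your $Z_W$), using \Cref{ws closure} and \Cref{proj} together with the monodromy-invariance of the Zariski closure of $\hat{\Phi}_S(\tilde W)$ and the transitivity of $\bH_W(\CC)$ on the flag variety. The only small difference is that for the direction weakly special $\Rightarrow$ bi-algebraic the paper is briefer: since $\hat{\Phi}_S(\tilde W)$ is contained in the algebraic set $Z_W$, the inclusion $\overline{\hat{\Phi}_S(\tilde W)}^\Zar \subset Z_W$ together with $\tilde W$ being a component of $\hat{\Phi}_S^{-1}(Z_W)$ already forces $\tilde W$ to be a component of $\hat{\Phi}_S^{-1}(\overline{\hat{\Phi}_S(\tilde W)}^\Zar)$, so the equality $\overline{\hat{\Phi}_S(\tilde W)}^\Zar = Z_W$ you establish via the monodromy argument, while true and clean, is not strictly needed in that direction.
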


\begin{proof}
The proof is similar to the proof of \cite[Theor.4.1]{UY1}, we provide
it for completeness.

  Notice that the statement for the weakly special intersections follows immediately from the
  statement for the weakly special subvarieties. Hence we are reduced
  to prove that the weakly special subvarieties of $S$ coincide with the
  bi-algebraic subvarieties of $S$.

  That a weakly special subvariety of $S$ is bi-algebraic follows from
  the fact that a Hodge morphism of Hodge varieties $\varphi: T \to Y$
  is defined at the level of the universal cover by a closed analytic embedding $
  \cD_T^+ \hookrightarrow \cD_Y^+$ restriction of a closed algebraic immersion
 $\hat{\cD}_T \hookrightarrow \hat{\cD}_Y$.

  Conversely let $W$ be a bi-algebraic subvariety of $S$. With the
  notations of \Cref{ws closure} the period
  map ${\Phi_{S}}_{|W}: W \to \Hod^0(S, \VV)$ factorises trough the
  weakly special subvariety $\varphi((\Gamma_{\bH_{W}} \backslash
  \cD_{\bH_{W}}^+) \times \{t'\})$ of $\Hod^0(S, \VV)$. Let $Z$ be an
  irreducible component of the preimage of $W$ in $\tilde{S}$ and
  consider the lifting $\tilde{\Phi}_{|Z}: Z \to \cD_{\bH_{W}}^+$ of
  ${\Phi_{S}}_{|W}$ to $Z$. As $W$ is bi-algebraic the Zariski-closure
  of $\tilde{\Phi}_{|Z}(Z)$ in $\hat{\cD}_{\bH_{W}}$ has to be stable
  under the monodromy group $\bH_{W}(\CC)$, hence equal to
  $\hat{\cD}_{\bH_{W}}$.
  Thus $Z = (\tilde{\Phi}_{|Z})^{-1}(\cD_{\bH_{W}}^+)$ and $W$ is
  weakly special.
 \end{proof}

We will need the following result, proven for Shimura varieties in
\cite{KUY16}, conjectured in general in \cite[Conj.7.6]{klin} as a
special case of \cite[Conj.7.5]{klin}, and proven by Bakker-Tsimerman \cite[Theor. 1.1]{BT}:

\begin{theor}(Ax-Lindemann for $\ZZ$VMHS) \label{Ax-Lindemann}
Let $(S, \VV)$ be a $\ZZ$VMHS. Let $Y\subset \tilde{S}$ be a closed
algebraic subvariety for the bi-algebraic structure defined by
$\VV$. Then $\overline{\pi(Y)}^\Zar$ is a bi-algebraic 
subvariety of $S$, i.e. a weakly special subvariety of $S$ for $\VV$.
\end{theor}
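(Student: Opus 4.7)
The plan is to follow the Pila--Zannier strategy, as implemented in the period-domain setting by Bakker and Tsimerman. Setting $W := \overline{\pi(Y)}^\Zar$, I would first replace $S$ by $W$ (restricting $\VV$ to $W^{\sm}$ and passing to the normalization) so as to reduce to the case where $\pi(Y)$ is Zariski dense in $S$; by \Cref{bi-alg} combined with \Cref{ws closure} it then suffices to prove that $S$ itself is bi-algebraic, i.e., that $\hat{\Phi}_S(\tilde{S})$ is Zariski dense in the ``monodromy factor'' $\hat{\cD}_{\bH_S}$ cut out in $\hat{\cD}$ by the algebraic monodromy group $\bH_S$ of $S$ for $\VV$.

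The main analytic input I would import is the o-minimality of the period map from \cite{BKT}: on a suitable $\RR_\alg$-definable fundamental set $\cF$ for the $\Gamma$-action on $\tilde{S}$, the lifted period map $\tilde{\Phi}_S|_{\cF}$ is definable in $\RR_{\an,\exp}$, while the compact dual $\hat{\cD}$ is a complex algebraic flag variety for $\G(\CC)$. I fix an algebraic subvariety $\hat{Y}\subset \hat{\cD}$ minimal such that $Y$ is an analytic irreducible component of $\hat{\Phi}_S^{-1}(\hat{Y})$, and consider the $\RR_{\an,\exp}$-definable set
$$\Sigma := \{\, g \in \G(\RR)^+ : \dim(g\hat{Y}\cap \hat{Y}) = \dim \hat{Y}\,\}.$$
Any $\gamma \in \Gamma$ for which $\gamma^{-1}Y$ meets $\cF$ in an analytic set of dimension $\dim Y$ lies in $\Sigma$, because then $\gamma^{-1}\hat{Y}$ and $\hat{Y}$ must share a component of maximal dimension after applying the period map.

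The counting step is the core of the argument. Since $\pi(Y)$ is Zariski dense in $S$, a volume comparison between $\pi(Y)\cap \pi(\cF)$ and $\cF$ shows that the number of $\gamma\in \Gamma\cap \Sigma$ of height $\leq T$ grows polynomially in $T$. By the Pila--Wilkie counting theorem applied to $\Sigma$, this polynomial growth is incompatible with $\Sigma$ consisting only of its transcendental part, and hence $\Sigma$ must contain a positive-dimensional real semi-algebraic germ through the identity. Closing under multiplication, passing to the Lie algebra, and Zariski-closing in $\G(\CC)$ then produces a positive-dimensional $\QQ$-algebraic subgroup $H\subset \G$ with $H\cdot \hat{Y}=\hat{Y}$.

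The last step is to promote this local symmetry into the desired weakly special structure. Conjugating $H$ by the arithmetic, Zariski-dense subgroup $\Gamma \subset \G_S(\QQ)$, and using that $\hat{Y}$ arose from a horizontal submanifold of $\cD^+$ (so that $H$ respects the Hodge filtration), I would argue that the normal closure of $H$ in the generic Mumford-Tate group $\G_S$ is itself a Hodge subgroup $\bH \subset \G_S$. The induced splitting of the adjoint Hodge datum then identifies $\hat{Y}$ as a component of a fibre of the projection $\hat{\cD}_{\G_S} \to \hat{\cD}_{\G'}$, which is precisely the geometric shape of a weakly special subvariety as described in \Cref{ws closure}. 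I expect this final step---checking that $H$ is defined over $\QQ$, preserves $F^\bullet$, and that its normal closure is a Hodge subgroup compatible with the adjoint splitting---to be the main obstacle, since it interweaves arithmeticity, the $\QQ$-structure on $\G_S$, Griffiths transversality, and the rigidity of the monodromy representation in a nontrivial way.
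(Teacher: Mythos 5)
The paper does not prove \Cref{Ax-Lindemann} at all: it is cited as a black box, attributed to Bakker--Tsimerman \cite[Theor.~1.1]{BT}, who prove the (strictly stronger) Ax--Schanuel theorem for variations of Hodge structures, with Ax--Lindemann deduced as a corollary. So there is no ``paper's own proof'' against which to compare your proposal; you are sketching a proof of an input that the authors import wholesale.

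Regarding the sketch itself: it is recognizably the Pila--Zannier strategy (o-minimal definability of the restricted period map, a counting lemma, Pila--Wilkie, production of a positive-dimensional stabilizing subgroup, then a promotion step), which is in the right family of ideas, but there are genuine gaps beyond the one you flag. First, the implication ``$\gamma^{-1}Y$ meets $\cF$ in dimension $\dim Y \Rightarrow \gamma \in \Sigma$'' is not automatic: $\hat{\Phi}_S$ is far from injective, and one has to set things up so the counted $\gamma$ actually stabilize a fixed algebraic germ (in the Shimura case this is handled by working with a maximal semi-algebraic piece of $\pi^{-1}(W)$ inside the fundamental set, and the general VHS case requires additional care because $\cD^+$ is not a bounded symmetric domain). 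Second, the ``volume comparison'' giving polynomially many $\gamma$ of bounded height has to be made precise using the quasi-projectivity of $S$ and the tameness estimates from \cite{BKT}; it is not a one-line observation. Third, and most seriously, the final promotion step — showing that the Zariski closure of the germ of real stabilizers is a $\QQ$-subgroup whose normal closure in $\G_S$ is a Hodge subgroup, and that $\hat{Y}$ is genuinely the expected fiber of the adjoint splitting — is the heart of the theorem and is where the actual work lives. You explicitly say you expect this to be the main obstacle; it is, and the proposal offers no concrete mechanism to overcome it. As written the sketch cannot be regarded as a proof, though it is a fair outline of the strategy that underlies the literature you would cite.
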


\section{A converse to \Cref{CDK}: proof of \Cref{converse}} \label{proofconverse}

\begin{proof}[\unskip\nopunct]

  Let $f: S' \to S$ be a finite \'etale cover and let $\VV' := f^*
  \VV$. By abuse of notation let $f$ still denote the natural
  map $\cV' \to \cV$. The reader will immediately check the following
  (where, with the notations of \Cref{intro}, we naturally identify
  $V'$ with $V$):

\begin{lem}
\begin{itemize}
\item[(a)] $$ \forall \, \lambda \in V^*, \;\; \forall \; i \in \ZZ,
  \quad  {\VV'}^i(\lambda)= f^{-1} \VV^i(\lambda)\quad
  \textnormal{and}  \quad f({\VV'}^i(\lambda))= \VV^i(\lambda)\;\;.$$
\item[(b)]the $f$-image of a special subvariety of $S'$ for $\VV'$ is
  a special subvariety of $S$ for $\VV$; conversely the $f$-preimage
  of a special subvariety of $S$ for $\VV$ is a finite union of
  special subvarieties of $S'$ for $\VV'$.
\end{itemize}
\end{lem}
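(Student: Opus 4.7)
The lemma is a straightforward unpacking of the definitions; the plan is to verify each assertion directly.

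For part (a), since $f:S'\lo S$ is finite \'etale, the universal cover $\tilde{S}$ of $S$ also serves as a universal cover of $S'$, so the identification $V'=V$ is canonical and the \'etal\'e space $\cV'$ of $\VV'$ is the fiber product $\cV\times_S S'$. The flat leaf $\VV'(\lambda)$ then equals $f^{-1}(\VV(\lambda))$, both being the image of the constant section $\tilde{S}\times\{\lambda\}$ in the respective \'etal\'e spaces. Since the Hodge filtration pulls back under morphisms of $\ZZ$VHS, we also have $F^i\cV'=f^{-1}(F^i\cV)$. Intersecting these equalities gives ${\VV'}^i(\lambda)=f^{-1}(\VV^i(\lambda))$; and since $f$ is surjective, applying $f$ yields $f({\VV'}^i(\lambda))=\VV^i(\lambda)$.

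For part (b), the plan is to use that the Mumford-Tate group is a pointwise Hodge-theoretic invariant. The canonical identification $\VV'_{s'}\simeq\VV_{f(s')}$ of $\QQ$-Hodge structures gives $\MT(\VV'_{s'})=\MT(\VV_{f(s')})$ for every $s'\in S'$; consequently, for any irreducible algebraic $W'\subset S'$, the generic Mumford-Tate group of $(W',\VV'_{|W'})$ equals that of $(f(W'),\VV_{|f(W')})$. With this in hand, if $Y'\subset S'$ is special with generic Mumford-Tate group $\mathbf{H}$, then $f(Y')$ is irreducible algebraic in $S$ (as $f$ is finite) with generic Mumford-Tate group $\mathbf{H}$; any larger irreducible algebraic $Z\subset S$ with the same group would lift, via an irreducible component of $f^{-1}(Z)$ containing $Y'$, to a proper enlargement of $Y'$ with the same generic Mumford-Tate group in $S'$, contradicting the maximality of $Y'$. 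Conversely, if $Y\subset S$ is special, the finitely many irreducible components of $f^{-1}(Y)$ are each irreducible algebraic in $S'$ with generic Mumford-Tate group equal to that of $Y$, and their maximality in $S'$ is inherited from the maximality of $Y$ in $S$ by the same transfer argument.

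The only mild obstacle is the pointwise-to-generic Mumford-Tate comparison in (b), which relies on $f$ being \'etale in order to identify fibers and their Hodge structures canonically; once this is in place, both assertions follow by elementary transfer along the finite correspondence $f$.
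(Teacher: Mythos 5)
The paper itself does not supply a proof of this lemma (it is introduced with ``The reader will immediately check the following''), so I can only assess your argument on its own terms.

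Part (b) of your argument is essentially correct. The transfer of generic Mumford--Tate groups along the finite \'etale map $f$, the identification of irreducible components of $f^{-1}(Y)$ with lifts having the same generic Mumford--Tate group, and the two maximality arguments all go through (one should note in passing that a Hodge--generic smooth point of $W'$ can be chosen to land in the smooth locus of $f(W')$, since $f(W')$ need not be smooth, but this is a minor point).

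Part (a) is the problem. The first displayed equality ${\VV'}^i(\lambda)=f^{-1}\VV^i(\lambda)$ is in fact \emph{false} in general, and your justification of it cannot be right. You assert that $\VV'(\lambda)$ and $f^{-1}(\VV(\lambda))$ are both ``the image of the constant section $\tilde S\times\{\lambda\}$'', but only the first one is: $\VV'(\lambda)=\pi'(\tilde S\times\{\lambda\})$ by definition, whereas $f^{-1}(\VV(\lambda))=\pi'\bigl(\pi^{-1}(\pi(\tilde S\times\{\lambda\}))\bigr)=\pi'\bigl(\tilde S\times\rho(\pi_1(S,s_0))\cdot\lambda\bigr)$. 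Thus
$$ f^{-1}\bigl(\VV^i(\lambda)\bigr)\;=\;\bigcup_{\gamma\in\pi_1(S',s'_0)\backslash\pi_1(S,s_0)}{\VV'}^i\bigl(\rho(\gamma)\lambda\bigr), $$
a finite union over coset representatives, and this equals ${\VV'}^i(\lambda)$ only when the $\pi_1(S)$- and $\pi_1(S')$-orbits of $\lambda$ coincide. Concretely, for any $\lambda$ not fixed (up to the $\pi_1(S')$-orbit) by a nontrivial coset representative, $f^{-1}(\VV(\lambda))$ breaks into several $\nabla'$-leaves of which $\VV'(\lambda)$ is only one; an explicit example is easy to produce for $S=\CC^*$ with nontrivial monodromy and $S'\to S$ a degree-two cover. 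Consequently, your subsequent derivation of $f({\VV'}^i(\lambda))=\VV^i(\lambda)$ from the alleged first equality is vacuous; that second identity is nonetheless correct, but it should be proved directly from $f\circ\pi'=\pi$ (which gives $f(\VV'(\lambda))=\VV(\lambda)$) together with the general fact $f(A\cap f^{-1}(B))=f(A)\cap B$ for $f$ surjective. In short: the second equality in (a) is true and provable, the first as stated is not, and your proof of (a) needs to replace ${\VV'}^i(\lambda)=f^{-1}\VV^i(\lambda)$ by the displayed finite union (which is all that is actually needed for the reduction step in the proof of the converse to Cattani--Deligne--Kaplan).
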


\noi
Hence proving \Cref{converse} for $\VV$ is equivalent to proving it for
$\VV'$. As any finitely generated linear group admits a torsion-free
finite index subgroup (Selberg's lemma) we can thus assume without loss of generality by
replacing $S$ by a finite \'etale cover if necessary that the
monodromy $\rho(\pi_1(S, s_{0})) \subset \GL(V_\ZZ)$ is torsion-free.

\smallskip
Let $\lambda \in V^*$ be such that $\VV^i(\lambda)$ is an
algebraic subvariety of $\cV$. Hence $\VV^i([\lambda]) \subset \proj
\cV$ is also algebraic. As the projection $p: \proj \cV \to S$ is a
proper morphism, it follows that the set
$S^i(\lambda):= p(\VV^i([\lambda])$ is an algebraic subvariety of
$S$. 

Let $n: S' \to S^i(\lambda)$ be the smooth locus of the normalisation
of one irreducible component of $S^i(\lambda)$. Hence $S'$ is
connected. Let $\pi': \tilde{S'} \to S'$ be its universal cover and
let $\rho': \pi_1(S', s'_0) \to \GL(V)$ be the 
monodromy of the local system $\VV':=i^{-1} \VV$ on
$S'$. Let $\widetilde{{\VV'}^i(\lambda)}:=
\pi'^{-1}({\VV'}^i(\lambda)) \subset 
\widetilde{\VV'(\lambda)}:=\pi'^{-1}(\VV'(\lambda)) \simeq
\tilde{S'} \times \{\lambda\}\subset \widetilde{\cV'}\simeq
\tilde{S'} \times V$. 

As $\widetilde{{\VV'}^i(\lambda)} \subset \tilde{S'} \times\{\lambda\}$
and $p: \widetilde{{\VV'}^i(\lambda)} \to \tilde{S'}$ is surjective, it
follows that $\widetilde{{\VV'}^i(\lambda)} = \tilde{S'}
\times\{\lambda\}$, hence ${\VV'}^i(\lambda) = \VV'(\lambda)$. 

In particular ${\VV'}^i(\lambda) \cap V= \VV'(\lambda) \cap
V= \rho(\pi_1(S', s'_{0}))\cdot \lambda \subset V$,
where $V$ is identified with $\cV'_{s'_{0}}$. As ${\VV'}^i(\lambda) \subset
\cV'$ is an algebraic subvariety, its fiber
${\VV'}^i(\lambda) \cap V$ is an algebraic subvariety of $V$. On the
other hand the set $\rho(\pi_1(S', s'_{0}))\cdot \lambda$ is
countable. Thus ${\VV'}^i(\lambda) \cap V$ is a finite set of
points, in particular $p: {\VV'}^i(\lambda) \to S'$ is finite \'etale. 

It follows that the smallest $\QQ$-sub-local system $\WW'_\QQ \subset
{\VV}'_\QQ$ whose complexification $\WW' \subset \VV'$
contains ${\VV'}^i(\lambda)$ has finite monodromy. As the monodromy
$\rho(\pi_1(S'))$ is a subgroup of $\rho(\pi_1(S)$ which is assumed to
be torsion-free, it follows that the local system $\WW'_\QQ$ is trivial. By the theorem of the
fixed part (see \cite[Cor. 7.23]{Schmid}) $\WW'_\QQ$ is a constant
sub-$\QQ$VHS of $\VV'$. It follows easily that $n(S')$ is the smooth
locus of an irreducible component of the Hodge locus in $S$ defined by
the fiber $W_\QQ\subset V_\QQ$ of $\WW'$. 

This finishes the proof that $S^i(\lambda)$ is a union of special
subvarieties of $S$ and that $p: \VV^i(\lambda) \to S^i(\lambda)$ is
finite.
\end{proof}

\section{Proof of \Cref{closure}} \label{proofclosure}

\Cref{closure} follows from \Cref{Ax-Lindemann} and the following

\begin{prop} \label{algeb}
Any component of $S^i(\lambda)$ is the image under $\pi$ of an
algebraic subvariety of $\tilde{S}$ (for the bi-algebraic structure on
$S$ defined by $\VV$).
\end{prop}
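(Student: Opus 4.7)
The plan is to realize each component of $S^i(\lambda)$, up to the universal cover, as an analytic irreducible component of the preimage under the lifted period map $\hat{\Phi}_S:\tilde{S}\to\hat{\cD}$ of an explicit algebraic subvariety of the compact dual $\hat{\cD}$, and then read off bi-algebraicity essentially for free from this description.

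First I would pass to the trivialization $\tilde{\cV}\simeq\tilde{S}\times V$. Under it, the flat leaf $\VV(\lambda)$ lifts to $\tilde{S}\times\{\lambda\}$, and at a point $x\in\tilde{S}$ the fiber of $F^i\tilde{\cV}$ is precisely the subspace $F^i_{\hat{\Phi}_S(x)}\subset V$ encoded by the Hodge filtration parametrized by $\hat{\Phi}_S(x)\in\hat{\cD}$. Consequently, the analytic subset $\tilde{\VV}^i(\lambda):=(\tilde{S}\times\{\lambda\})\cap F^i\tilde{\cV}$ projects isomorphically onto
\[
\hat{\Phi}_S^{-1}(\hat{\cD}^i_\lambda)\subset\tilde{S},\qquad\text{where}\qquad \hat{\cD}^i_\lambda:=\{F^\bullet\in\hat{\cD}\,:\,\lambda\in F^i\}.
\]
The crucial observation is that $\hat{\cD}^i_\lambda$ is a closed algebraic subvariety of the flag variety $\hat{\cD}$: it is cut out by the incidence condition that a fixed vector $\lambda$ belong to a distinguished step of the varying filtration, a condition which is algebraic on any parameter space of flags.

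Given a component $W=p(Y)$ of $S^i(\lambda)$, where $Y$ is an irreducible analytic component of $\VV^i(\lambda)$ in the \'etal\'e space of $\VV$, its preimage in $\tilde{S}\times\{\lambda\}$ is a monodromy-orbit of irreducible analytic components of $\tilde{\VV}^i(\lambda)$. Choosing any one such and projecting to $\tilde{S}$, one obtains an irreducible analytic component $\tilde{W}$ of $\hat{\Phi}_S^{-1}(\hat{\cD}^i_\lambda)$ with $\pi(\tilde{W})=W$. Setting $Y':=\overline{\hat{\Phi}_S(\tilde{W})}^{\Zar}\subset\hat{\cD}$, one has $Y'\subset\hat{\cD}^i_\lambda$ by construction, hence the inclusions
\[
\tilde{W}\subset\hat{\Phi}_S^{-1}(Y')\subset\hat{\Phi}_S^{-1}(\hat{\cD}^i_\lambda).
\]
Since $\tilde{W}$ is already an analytic irreducible component of the outer set, it is a fortiori an analytic irreducible component of the intermediate set $\hat{\Phi}_S^{-1}(Y')$. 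By \Cref{bialgebraic} this is exactly the condition for $\tilde{W}$ to be an algebraic subvariety of $\tilde{S}$ for the bi-algebraic structure $(\hat{\Phi}_S,\rho_S)$, and the proposition follows.

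The only real technical nuisance I anticipate is the lifting step: one must relate irreducible analytic components of $\VV^i(\lambda)$ in the \'etal\'e space of $\VV$ to (monodromy-orbits of) irreducible analytic components of $\tilde{\VV}^i(\lambda)$ in $\tilde{S}\times V$, so that $W=p(Y)$ really coincides with $\pi(\tilde{W})$ for some component $\tilde{W}$ produced as above. Once this bookkeeping is made precise using the fact that the natural surjection $\tilde{S}\times V\to\cV$ is a quotient by the monodromy action, the bi-algebraic conclusion is formal from the set-theoretic chain of inclusions displayed above, with no further Hodge-theoretic input needed beyond the algebraicity of the incidence locus $\hat{\cD}^i_\lambda\subset\hat{\cD}$.
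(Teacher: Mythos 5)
Your proof is correct and takes essentially the same approach as the paper's: both identify the same incidence locus in the compact dual $\hat{\cD}$ (the paper calls it $\hat{\cD}(\lambda)$, you call it $\hat{\cD}^i_\lambda$), observe that it is algebraic, and deduce that the relevant component of $\hat{\Phi}_S^{-1}(\hat{\cD}^i_\lambda)$ is algebraic in $\tilde{S}$ for the bi-algebraic structure. You are somewhat more explicit than the paper about the bookkeeping between components in the étalé space of $\VV$ and components in $\tilde{S}\times V$, and about why being a component of $\hat{\Phi}_S^{-1}(\hat{\cD}^i_\lambda)$ forces being a component of $\hat{\Phi}_S^{-1}(Y')$ for $Y'=\overline{\hat{\Phi}_S(\tilde{W})}^{\Zar}$, but these are the same points the paper treats implicitly via its commutative diagrams.
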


\begin{proof}[Proof of \Cref{algeb}]

The quadruple $(\VV_\ZZ, \cV, \nabla, F^\bullet)$ defining the
$\ZZ$VHS $\VV$ is the pullback under $\Phi_S$ of a similar quadruple
$(\VV_{\ZZ, \Gamma \backslash \cD^+} , \cV_{\Gamma \backslash \cD^+},
\nabla_{\Gamma \backslash \cD^+}, F^\bullet_{\Gamma \backslash
  \cD^+})$ on the connected Hodge variety $\Gamma \backslash \cD^+$, 
which however does not satisfy Griffiths transversality. This
quadruple itself comes, by restriction to $\cD^+$ and descent to
$\Gamma \backslash \cD^+$, from a $\G(\CC)$-equivariant quadruple
$(\VV_{\ZZ, \hat{\cD}} , \cV_{\hat{\cD}}, 
\nabla_{\hat{\cD}}, F^\bullet_{\hat{\cD}})$ on $\hat{\cD}$. As
$\hat{\cD}$ is simply connected the algebraic flat connection
$\nabla_{\hat{\cD}}$ induces a canonical algebraic trivialization
$\cV_{\hat{\cD}} \simeq \hat{\cD} \times V$. Hence we have 
a commutative diagram 
\begin{equation} \label{eq1}
\xymatrix{
\cV \ar[d]_{p} &\tilde{\cV}\simeq \tilde{S} \times V \ar[l]_>>>>>{\pi}
\ar[r]^>>>>>{\hat{\Phi}_S \times Id}
\ar[d]_{p} & \hat{\cD}_S \times V \simeq \cV_{\hat{\cD}} \ar[d]^{p_1} \\
S & \tilde{S} \ar[l]^{\pi} \ar[r]_{\hat{\Phi}_S } & \hat{\cD}_S \;\;.}
\end{equation}

Let $N$ be a component of $S^i(\lambda)$. Hence 
$N= \pi(Y)$, where $Y = p(W)$ for $W \subset \tilde{\cV} = \tilde{S}
\times V$ an analytic irreducible component of the complex analytic
subvariety 
$$\widetilde{\VV(\lambda)} :=(\hat{\Phi}_S \times Id)^{-1}( (\hat{\cD} \times
\{\lambda\}) \cap F^i \cV_{\hat{\cD}})$$
of $\tilde{\cV}$. 
Let us define $\hat{\cD}(\lambda) \subset \hat{\cD}$ as the projection $p_1((\hat{\cD} \times
\{\lambda\}) \cap F^i \cV_{\hat{\cD}})$. In particular, corresponding
to the diagram~(\ref{eq1}), we have a commutative
diagram 

\begin{equation} \label{eq2}
\xymatrix{\pi(W) \ar[d]_{p} & W \ar[l]_>>>>>{\pi}
\ar[r]^>>>>>{\hat{\Phi}_S \times Id}
\ar[d]_{p} &  (\hat{\cD} \times \{\lambda\}) \cap F^i \cV_{\hat{\cD}}\ar[d]^{p_1} \\
N & Y \ar[l]^{\pi} \ar[r]_{\hat{\Phi}_S } & \hat{\cD}(\lambda) }\;\;.
\end{equation}

Notice that both $(\hat{\cD} \times \{\lambda\})$ and $ F^i
\cV_{\hat{\cD}}$, hence also their intersection $(\hat{\cD} \times
\{\lambda\}) \cap F^i \cV_{\hat{\cD}}$, are algebraic subvarieties of
$\hat{\cD} \times V$. Thus their projection $\hat{\cD}(\lambda)$ is an
algebraic subvariety of $\hat{\cD}$. It follows that the irreducible
component $Y$ of $\hat{\Phi}_S^{-1}(\hat{\cD}(\lambda))$ is an
algebraic subvariety of $\tilde{S}$ for the bi-algebraic structure on
$S$ for $(S, \VV)$.

\end{proof}

\section{Algebraicity of the positive dimensional $F^i$-locus and
  relation with the $\QQ$-structure} \label{proofA}

\subsection{An algebraicity result for flat complex connections}

\begin{theor} \label{Aplus}
Let $d$ be a positive integer.
Let $p :(\cV, \nabla)\to S$ be an algebraic flat connection on a smooth
quasi-projective complex variety $S$ and $\VV \subset \cV$ the
associated complex local system. Let $F\subset \cV$ be an
algebraic subvariety. For $x \in F$ let $N_{F,x}$ denote the union of
irreducible components containing $x$ of the complex analytic subvariety $(\VV(x)
\cap F)^\red$ of the \'etal\'e space
of $\VV$.

The locus $A_{F, \geq d}$ of closed points $x\in
F$ such that $N_{F,x}$ has dimension at least $d$ at $x$, is an algebraic subvariety of $\cV$. 
\end{theor}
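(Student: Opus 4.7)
The strategy is to reformulate the condition defining $A_{F,\geq d}$ in algebro-geometric terms via jet schemes, and to invoke Artin approximation for the passage from formal to analytic. The key observation is that the condition $\dim_x N_{F,x} \geq d$ is local at $x$: in the \'etal\'e space of $\VV$, the components of $\VV(x) \cap F$ containing $x$ agree, as germs at $x$, with the germ at $x$ of the intersection $L_x^{\mathrm{loc}} \cap F$, where $L_x^{\mathrm{loc}}$ is the local flat leaf of $\nabla$ through $x$. Equivalently, $x \in A_{F,\geq d}$ if and only if there exists a germ at $x$ of a $d$-dimensional analytic subvariety of $\cV$ contained in $F$ and tangent, at each of its points, to the algebraic horizontal distribution $H \subset T\cV$ determined by $\nabla$.

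For each $k \in \NN$ I would introduce the jet scheme $\mathcal{J}^k \subset J^k(\Delta^d, \cV)$ whose fiber at $x$ parametrizes $k$-jets at $0$ of holomorphic maps $(\Delta^d, 0) \to (\cV, x)$ satisfying: (i) image in $F$ to order $k$, (ii) tangent to $H$ to order $k$, and (iii) injective differential at $0$. Each of these is an algebraic condition on jets, using that both $F$ and $H$ are algebraic; hence the source map $\mathcal{J}^k \to \cV$ is a morphism of algebraic varieties. Its image $Z_k$ is constructible by Chevalley, and the decreasing sequence of Zariski closures $W_k := \overline{Z_k}^{\mathrm{Zar}}$ stabilizes to an algebraic subvariety $W = W_{k_0}$ by Noetherianity of $\cV$.

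The inclusion $A_{F,\geq d} \subset W$ is immediate: an actual analytic flat $d$-immersion in $F$ through $x$ yields a $k$-jet in $\mathcal{J}^k_x$ for every $k$. For the converse one combines two ingredients. First, a Mittag--Leffler-type argument in the projective system $(\mathcal{J}^k_x)_k$ of algebraic fibers, using Noetherianity of each $\mathcal{J}^k_x$ to extract from the merely non-empty fibers a coherent inverse-limit element, i.e.\ a formal solution. Second, M.~Artin's approximation theorem, which upgrades this formal flat $d$-immersion into $F$ through $x$ to an analytic one. Combined with upper semi-continuity of local analytic dimension in algebraic families to handle the points of $W \setminus Z_{k_0}$, this yields $A_{F,\geq d} = W$, hence algebraic.

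The main obstacle is the formal-to-analytic descent, which couples Artin approximation with the inverse-limit extraction of a coherent formal solution from the per-order jet data. A secondary technicality is matching the Zariski-constructible structure of the $Z_k$ with the Zariski-closed structure of $A_{F,\geq d}$, addressed by passing to closures and invoking upper semi-continuity of analytic dimension along algebraic families. In favourable settings (e.g.\ when $\VV$ underlies a $\ZZ$VHS, as needed to deduce Theorem~\ref{A}) one could alternatively argue via the o-minimality results of BKT applied to the pullback of $F$ to a fundamental domain of the universal cover, but the jet-theoretic route sketched above applies to the general algebraic flat connection considered in the statement.
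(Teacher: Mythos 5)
Your strategy shares the paper's basic philosophy (iterate a condition of increasing order and use Noetherianity for stabilization), but it is carried out at jet level rather than tangent level, and this makes the converse inclusion $W\subset A_{F,\geq d}$ genuinely problematic. There are two concrete gaps. First, the Mittag--Leffler extraction of a coherent formal jet from the fibers $\mathcal{J}^k_x$ is not justified: condition (iii) makes $\mathcal{J}^k_x$ quasi-affine, not affine, so the standard non-emptiness of inverse limits of non-empty affine schemes with affine transition maps does not apply, and a decreasing chain of constructible images need not stabilize in a Noetherian scheme. Second, and more seriously, the proposed passage from $Z_{k_0}$ to its Zariski closure $W$ via ``upper semi-continuity of local analytic dimension in algebraic families'' is circular: the germs $N_{F,x}$ do \emph{not} vary in an algebraic family over $\cV$ --- that is exactly the difficulty the theorem is addressing --- and the assertion that $\{x:\dim_x N_{F,x}\geq d\}$ is Zariski-closed is essentially the conclusion. (A minor point: the Artin approximation step is unnecessary even if it could be made to apply. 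Once you have a compatible formal $d$-immersion into $F$ tangent to the horizontal distribution at $x$, its image lies in the formal completion $\widehat{L_x\cap F}$ of the honest analytic germ $L_x\cap F$; faithful flatness of formal completion then gives $\dim_x(L_x\cap F)\geq d$ directly, with no approximation.)

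The paper avoids all of this by working with first-order data only. It sets $A_{F,\geq d,0}:=F$ and $A_{F,\geq d,n+1}:=\{x\in A_{F,\geq d,n}:\dim (T_h A_{F,\geq d,n})_x\geq d\}$, a decreasing chain of algebraic subvarieties that stabilizes at $A_{F,\geq d,\infty}$ by Noetherianity; the inclusion $A_{F,\geq d}\subset A_{F,\geq d,\infty}$ is an easy induction using $\dim N\leq\dim T_xN$. For the converse, stabilization gives $\dim (T_h A_{F,\geq d,\infty})_x\geq d$ for all $x$, and on a Zariski-dense open $U$ of smooth points where the flat projection $h|_{\tilde A_{F,\geq d,\infty}}$ is a submersion onto its image, the fibers of $h$ --- which are precisely the local leaves intersected with $A_{F,\geq d,\infty}\subset F$, i.e.\ pieces of $N_{F,x}$ --- have dimension exactly $\dim(T_hA)_x\geq d$. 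The extension from $U$ to all of $A_{F,\geq d,\infty}$ then uses ordinary upper semi-continuity of fiber dimension of the concrete holomorphic map $h|_{\tilde A_{F,\geq d,\infty}}$, which is a classical fact about a fixed map and is not circular. You should either replace your jet argument by this tangent-space argument, or, if you wish to keep jets, reformulate in terms of a proper family (e.g.\ punctual Hilbert-scheme-type truncations) so that the images are closed, supply a genuine inverse-limit argument, and replace the u.s.c.\ step with the paper's dense-open-plus-fiber-dimension reasoning.
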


\begin{proof}
Let $T_h\cV \subset T\cV$ denote the horizontal algebraic subbundle of the
tangent bundle $T\cV$ defined by the flat connection $\nabla$. We write
$q:\proj (T\cV)\to \cV$ for the (proper) natural projection.
Let  $T_hF:=T_h\cV\times_{T\cV}TF$. We define inductively reduced algebraic varieties
$(A_{F,\geq d, n})_{n\in\NN}\subset \cV$ by
\begin{itemize}
  \item[-]
    $A_{F,\geq d, 0}:=F$,
   \item[-]
$A_{F,\geq d, n+1}:=\{ x \in A_{F,\geq d, n} \, |\, \dim ((T_h
A_{F,\geq d, n})_x) \geq d \} \;\;.$
\end{itemize}

Let $A_{F,\geq d,\infty}:=\bigcap_{n\in\NN}A_{F,\geq d, n}$.
As
the $A_{F,\geq d, n}$ are algebraic subvarieties of $\cV$, so is
$A_{F, \geq d, \infty}$.

From now on we write for simplicity $A_{\geq d}:=A_{F, \geq d}$, $A_{\geq d, n}:=
A_{F,\geq d, n}$, $A_{\geq d,\infty}:= A_{F,\geq d,\infty}$ and $N_x:=
N_{F,x}$. The
result then follows from \Cref{equality} below.
\end{proof}

\begin{lem} \label{equality}
The equality $A_{\geq d}=A_{\geq d, \infty}$ holds.
\end{lem}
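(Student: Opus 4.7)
The plan is to prove the two inclusions $A_{\geq d}\subset A_{\geq d,\infty}$ and $A_{\geq d,\infty}\subset A_{\geq d}$ separately. The first is a clean induction on $n$; the second exploits that $(A_{\geq d,n})_n$ stabilizes by Noetherianity, then produces analytic leaves of the correct dimension on a dense open part of the limit via the Frobenius theorem, and finally propagates these to an arbitrary point by upper semicontinuity of fibre dimension.

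For $A_{\geq d}\subset A_{\geq d,\infty}$, I show $A_{\geq d}\subset A_{\geq d,n}$ by induction on $n$, the case $n=0$ being the tautology $A_{\geq d}\subset F$. For the inductive step, fix $x\in A_{\geq d}$ and let $N$ be a local analytic component of $(\VV(x)\cap F)^\red$ through $x$ of dimension at least $d$. Since $N$ lies in the flat leaf $\VV(x)$, for every $y\in N$ sufficiently close to $x$ one has $\VV(y)=\VV(x)$, and $N$ remains a $\geq d$-dimensional component of $(\VV(y)\cap F)^\red$ at $y$; hence $N\subset A_{\geq d}\subset A_{\geq d,n}$ by the inductive hypothesis. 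Then $T_xN\subset (T_h\cV)_x$ because $N$ is tangent to the flat leaf, and $T_xN\subset T_xA_{\geq d,n}$, so $T_xN\subset (T_hA_{\geq d,n})_x$, whose dimension is therefore at least $\dim T_xN\geq\dim_xN\geq d$. This gives $x\in A_{\geq d,n+1}$.

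For $A_{\geq d,\infty}\subset A_{\geq d}$, Noetherianity of $\cV$ makes the descending chain stationary, so $A:=A_{\geq d,\infty}$ is an algebraic subvariety of $\cV$ satisfying $\dim(T_hA)_y\geq d$ for every $y\in A$. Fix $x\in A$ and let $Z$ be an irreducible component of $A$ through $x$. On the smooth locus $Z^\sm$ the coherent sheaf $T_h\cap TZ$ has fibrewise rank $\geq d$ everywhere, hence has some constant rank $r\geq d$ on a dense open $U\subset Z^\sm$. The key claim is that $T_h\cap TU$ is an involutive rank-$r$ subbundle of $TU$: in a flat trivialization $\cV|_B\simeq B\times V$ near any point of $U$, horizontal vector fields correspond to $V$-independent vector fields on $B$, so any local section $v$ of $T_h\cap TU$ extends to a local section $\tilde v$ of $T_h$ on $\cV$; the Lie bracket of two such extensions stays in $T_h$ by integrability of $\nabla$, and its restriction to $U$ is $[v,w]$, which is also tangent to $U$. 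By the holomorphic Frobenius theorem, $U$ is analytically foliated by $r$-dimensional integral submanifolds, each contained in a flat leaf of $\nabla$ and in $F$. Hence $\dim_y(\VV(y)\cap F)\geq r$ for every $y\in U$.

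To conclude, the remaining step is to propagate this bound to $x$. Working locally near $x$ in a flat trivialization $\cV\simeq B\times V$, the projection $\pi_2\colon A\cap(B\times V)\to V$ is algebraic, and its fibre through $y$ is precisely $\VV(y)\cap A$. The Frobenius leaves produced above give $\dim_y\pi_2^{-1}(\pi_2(y))\geq r$ for every $y\in U$, so Chevalley's upper semicontinuity of fibre dimension forces the closed subset $\{y\in A:\dim_y\pi_2^{-1}(\pi_2(y))\geq r\}$ to contain the closure of $U$ in $Z$, i.e.\ all of $Z$, and in particular $x$. This yields $\dim_x(\VV(x)\cap F)\geq r\geq d$, so $x\in A_{\geq d}$. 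The main subtlety of the whole argument is precisely this final propagation from the generic open $U$ to the possibly singular point $x\in Z\setminus U$; the Chevalley route through the locally defined leaf-label map $\pi_2$ seems the cleanest way to avoid fiddly limit-of-leaves arguments in the Douady space of analytic cycles.
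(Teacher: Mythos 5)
Your proof is correct, and it reproduces the paper's first inclusion verbatim (induction on $n$, using that the component $N$ lies in a single flat leaf and is contained in $A_{\geq d}\subset A_{\geq d,n}$). For the second inclusion your argument has the same skeleton as the paper's --- compute the fibre dimension of the leaf-coordinate map on a dense Zariski-open subset of $A_{\geq d,\infty}$, then propagate to all of $A_{\geq d,\infty}$ by upper-semicontinuity of fibre dimension --- but you insert an intermediate Frobenius step that the paper does without. The paper works with the globally defined map $h:\tilde{\cV}\simeq\tilde{S}\times V\to V$ on the universal cover: on the dense open locus where $h|_{\tilde A_{\geq d,\infty}}$ is smooth and locally submersive onto its image, its fibre through $\tilde x$ has dimension $\dim(T_hA_{\geq d,\infty})_x\geq d$, and one then invokes semicontinuity directly. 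Your route --- showing $T_h\cap TU$ is an involutive constant-rank subbundle, integrating it by the holomorphic Frobenius theorem to produce $r$-dimensional leaves inside flat leaves and inside $F$, and only then counting fibre dimension of $\pi_2$ --- is a heavier but equivalent way to get the same bound; the foliation it produces is not used beyond its dimension. One slip worth flagging: $\pi_2:A\cap(B\times V)\to V$ is only holomorphic, not algebraic, since the flat trivialization $\cV|_B\simeq B\times V$ is analytic and not algebraic; the semicontinuity you need is therefore Remmert's fibre-dimension theorem for holomorphic maps rather than Chevalley's theorem for algebraic morphisms. This does not affect the validity of the argument.
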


\begin{proof}

The inclusion $A_{\geq d}\subset A_{\geq d, \infty}$ is equivalent to
the inclusions $A_{\geq d}\subset A_{\geq d, n}$ for all $n \in \NN$,
which we show by induction on $n$. By definition $A_{\geq d} \subset
F= A_{\geq d, 0}$. Assume that
$A_{\geq d} \subset A_{\geq d, n}$ for some $n\in \NN$. By definition
of $A_{\geq d}$, for any
$x\in A_{\geq d}$ the variety $A_{\geq d}$ contains an 
irreducible component $N$ of $N_{F, x}$ through $x$ of dimension at
least $d$. Hence
$$ d \leq \dim(N) \leq \dim (T_xN) \leq  \dim ((T_h A_{\geq d,
  n})_x)\;\;,$$
hence $x \in A_{\geq d, n+1}$. This shows $A_{\geq d} \subset A_{\geq
  d, n+1}$ and finishes the proof by induction that $A_{\geq d}\subset
A_{\geq d, \infty}$.

\smallskip
Conversely let us prove that $A_{\geq d, \infty} \subset A_{\geq d}$. 
Let $h: \tilde{\cV} \to V$ denote the composition
$$h: \tilde{\cV} \simeq \tilde{S} \times V \stackrel{p_2}{\to} V$$
  (where the first isomorphism is provided by the flat trivialisation).
For $x \in \cV$ and $\tilde{x} \in 
\pi^{-1}(x) \subset \tilde{\cV} \simeq \tilde{S}
\times V$ let $N_{\tilde{x}}$ be the union of the
irreducible components passing through $\ti{x}$ of the complex analytic
subvariety $h^{-1}(h(\tilde{x})) \cap \pi^{-1}(F)$ of $\tilde{\cV}$. Thus the local
biholomorphism $\pi: \tilde{\cV} \to \cV$ identifies $N_{\tilde{x}}$ locally at $\ti{x}$ with
$N_{x}$ locally at $x$. 

By noetherianity there exists an $n \in \NN$ such that $A_{\geq d, n} =
  A_{\geq d, n+1}=A_{\geq d, \infty}$. Hence for any $x \in A_{\geq d,
    \infty} $ we have $\dim ((T_h A_{\geq d, \infty})_x) \geq d$. Let
  us consider the restriction
$$h_{|\tilde{A}_{\geq d, \infty}}:\tilde{A}_{\geq d, \infty}\to V$$ of $h$ to
  $\tilde{A}_{|\geq d, \infty}:=A_{\geq
    d,\infty}\times_S\tilde{S}$. Let $U_{\geq d, \infty} \subset
  A_{\geq d,\infty}$ be the Zariski-dense open subset of smooth points
  $x$ of $A_{\geq d,\infty}$ such that the complex analytic map
  $h_{|\tilde{A}_{\geq d, \infty}}$ is smooth and locally
    submersive onto its image at any $\tilde{x} \in 
\{x\}\times_S\tilde{S}$. Hence, for $x \in U_{\geq d, \infty}$, 
\begin{equation} \label{dim}
  \dim_{\tilde{x}} (\tilde{A}_{\geq d, \infty} \times_V h(\tilde{x}) )
=
\dim ((T_h A_{\geq d, \infty})_x) \geq d\;\;.
\end{equation}
Since $U_{\geq d, \infty}$ is Zariski-dense in $A_{\geq d,\infty}$ the
inequality $\dim_{\tilde{x}} (\tilde{A}_{\geq d, \infty} \times_V
h(\tilde{x}))  \geq d$ holds for any $ \tilde{x}$ in the preimage $\tilde{A}_{\geq d,
  \infty}$ of $A_{\geq d,\infty}$ in $\tilde{\cV}$. As $A_{\geq
  d,\infty} \subset F$, any analytic irreducible component of $\tilde{A}_{\geq d, \infty} \times_V
h(\tilde{x})$ containing $x$ is contained in $N_{\tilde{x}}$. Thus
\Cref{dim} implies that for any $x \in A_{\geq
  d,\infty}$ we have $\dim_x (N_x) \geq d$, i.e. $x \in A_{\geq d}$.
\end{proof} 

\subsection{Applications to $\QQ$-local systems}
The following saturation result will be crucial in the proof of
\Cref{main}:

\begin{prop} \label{saturation} In the situation of \Cref{Aplus}
  suppose moreover that the local system $\VV= \VV_\QQ \otimes_\QQ
  \CC$ is defined over $\QQ$.
  Let $A_{F, \geq d, \QQ}:=A_{F, \geq d} \cap \VV_\QQ$ be the
  locus of rational classes $x$ whose flat transport meets $F$ at $x$ in
  dimension $\geq d$ and let ${A'}_{F, \geq d}:=\overline{A_{F, \geq
      d, \QQ}}^\Zar$ be its Zariski-closure in $\cV$. 
There exist a Zariski-open dense subset $U$ of ${A'}_{F, \geq d}$ and,
for
each $x \in U$, a component $N_{F, x}^0$ of $N_{F,x}$ of dimension at
least $d$ such that
$U \subset\bigcup_{x\in U}N^0_{F, x}\subset {A'}_{F, \geq d}$.
\end{prop}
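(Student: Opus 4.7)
The key observation is that the monodromy of $\VV=\VV_\QQ\otimes_\QQ\CC$ preserves $V_\QQ$; hence for any rational class $\lambda\in V_\QQ$ the flat leaf $\VV(\lambda)\subset\cV$ is contained set-theoretically in $\VV_\QQ$. In particular, if $x\in A_{F,\geq d,\QQ}$ and $C\subset\VV(x)\cap F$ is any irreducible component through $x$ of dimension $\geq d$, then every $y\in C$ is rational, and $C$ remains an irreducible component of $\VV(y)\cap F$ through $y$ of dimension $\geq d$; thus $y\in A_{F,\geq d}$. Consequently $C\subset A_{F,\geq d}\cap\VV_\QQ=A_{F,\geq d,\QQ}$, and hence $C\subset {A'}_{F,\geq d}$.

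Next I apply Theorem \ref{Aplus} to the algebraic subvariety ${A'}_{F,\geq d}\subset\cV$ (in place of $F$) to produce the algebraic subvariety $A_{{A'}_{F,\geq d},\geq d}\subset\cV$. By the preceding observation, every $\geq d$-dimensional component of $\VV(x)\cap F$ through a rational class $x\in A_{F,\geq d,\QQ}$ lies in ${A'}_{F,\geq d}$ and, by maximality, remains a maximal irreducible component of $\VV(x)\cap{A'}_{F,\geq d}$ through $x$; therefore $\dim_x N_{{A'}_{F,\geq d},x}\geq d$, so $x\in A_{{A'}_{F,\geq d},\geq d}$. Zariski-density of $A_{F,\geq d,\QQ}$ in ${A'}_{F,\geq d}$ combined with the algebraicity of $A_{{A'}_{F,\geq d},\geq d}$ then forces the saturation identity
\[A_{{A'}_{F,\geq d},\geq d}={A'}_{F,\geq d},\]
so every $x\in{A'}_{F,\geq d}$ already satisfies $\dim_x N_{{A'}_{F,\geq d},x}\geq d$.

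To extract $U$, I decompose $A_{F,\geq d}={A'}_{F,\geq d}\cup B$, where $B$ is the union of the irreducible components of $A_{F,\geq d}$ not contained in ${A'}_{F,\geq d}$; then ${A'}_{F,\geq d}\cap B$ is a proper Zariski-closed subset of ${A'}_{F,\geq d}$. I take $U$ to be the complement in ${A'}_{F,\geq d}$ of $B$ together with a further proper Zariski-closed subset $\Sigma\subsetneq{A'}_{F,\geq d}$, chosen so that for $x\in U$ every irreducible component of $\VV(x)\cap F$ through $x$ has dimension $\geq d$. For such $x$ and any such component $C$, the argument of the first paragraph gives $C\subset A_{F,\geq d}$; since $C$ is irreducible analytic and passes through $x\notin B$, it is contained in an irreducible component of $A_{F,\geq d}$ already included in ${A'}_{F,\geq d}$, so $C\subset{A'}_{F,\geq d}$. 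Thus $N_{F,x}\subset{A'}_{F,\geq d}$ for $x\in U$, while the inclusion $U\subset\bigcup_{x\in U}N_{F,x}$ is automatic from $x\in N_{F,x}$. The principal obstacle will be the construction of $\Sigma$: one must show that the locus of $x\in{A'}_{F,\geq d}$ at which $\VV(x)\cap F$ admits a $<d$-dimensional irreducible component through $x$ is a proper Zariski-closed subvariety. This requires refining the horizontal tangent bundle analysis of Lemma \ref{equality}, now applied to $\tilde F$ rather than $\tilde A'$, and exploiting the saturation identity from the second paragraph to rule out such low-dimensional components away from a proper Zariski-closed set in the atypical intersection regime relevant here.
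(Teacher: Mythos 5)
Your first paragraph correctly isolates the mechanism driving the proposition: since the monodromy preserves $V_\QQ$, the flat leaf $\VV(x)$ through a rational $x$ lies in $\VV_\QQ$, so any irreducible component $C$ of $\VV(x)\cap F$ through $x$ of dimension $\geq d$ satisfies $C\subset A_{F,\geq d,\QQ}\subset {A'}_{F,\geq d}$. This is exactly the observation the paper uses. The saturation identity $A_{{A'}_{F,\geq d},\geq d}={A'}_{F,\geq d}$ in your second paragraph is a correct consequence of Theorem~\ref{Aplus}, but note that it controls the sets $N_{{A'}_{F,\geq d},x}$, not the sets $N_{F,x}$ appearing in the conclusion; it never actually gets used in your third paragraph.

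The third paragraph contains a genuine gap: the existence of the proper Zariski-closed set $\Sigma$. What you need is that for $x$ in a Zariski-dense open subset of each component $W$ of ${A'}_{F,\geq d}$, \emph{every} irreducible component of $\VV(x)\cap F$ through $x$ (not just the ones of dimension $\geq d$) lies in ${A'}_{F,\geq d}$; equivalently you must exclude the low-dimensional components of $\VV(x)\cap F$ through $x$. You explicitly defer this step, and the ingredients you gesture at (the saturation identity, an ``atypical intersection regime'') do not produce it. The paper does not try to describe the locus of low-dimensional components directly. Instead it argues component-by-component: on each irreducible $W\subset {A'}_{F,\geq d}$ it takes the Zariski-open $E\subset W$ where $W$ is smooth and the flat projection $h_{|\tilde W}\colon\tilde W\to V$ is submersive onto its image, with fibers of constant dimension $D\geq d$, and then sets $U := E\setminus A_{F,\geq D+1}$. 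For $x\in U$ the fiber of $h_{|\tilde W}$ is a smooth $D$-dimensional manifold contained in $N_{F,x}$, while $x\notin A_{F,\geq D+1}$ forces $\dim_x N_{F,x}\leq D$; together these pin $N_{F,x}$ down as smooth and irreducible of dimension exactly $D$ at $x$, and locally equal to the fiber, hence contained in $W$. That nondegeneracy of $h_{|\tilde W}$ combined with the upper bound coming from $A_{F,\geq D+1}$ is precisely what rules out stray small components and makes $U$ nonempty (via the rational points of $W\cap E$), and it is the content your proposal is missing.
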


\begin{proof}

As in the proof of \Cref{Aplus} we remove from now on the reference to
  $F$ in our notations.
  
  Notice first that $A_{\geq d, \QQ} \subset A_{\geq d} $ hence
${A'}_{\geq d}  \subset A_{\geq d} $ as $A_{\geq d}$ is
algebraic by \Cref{Aplus}.

Let $W$ be an irreducible component $W$ of ${A'}_{\geq d}$. Replacing $d$ by the largest
$d' \geq d$ such that $W= W \cap A_{\geq d'}$ we can
without loss of generality assume that $W \cap A_{\geq d+1}$ is a strict
closed algebraic subvariety of $W$. Let $U\subset W$ be the
Zariski-open dense subset of all $x\in W- (W \cap A_{\geq d+1})$ such that the variety ${A'}_{\geq d}$ is
smooth at $x$ and the morphism  
$h_{|\tilde{W}}:\tilde{W}\to V$ is locally submersive onto its image at 
any $\tilde{x} \in \{x\}\times_S\tilde{S}$. The fibers of the morphism
$h_{|\tilde{U}}:\tilde{U}\to V$ are smooth, let us call $D$ their
common dimension.

As $U$ is Zariski-open dense in $W$ and $W \cap A_{\geq d,
  \QQ}$ is Zariski-dense in $W$, there exists a point $x_0 \in U \cap
A_{\geq d, \QQ}$. The fiber of $h_{|\tilde{U}}$ at $\tilde{x_0}$
coincides near $\tilde{x}_0$ with a component of  $N_{\tilde{x}_{0}}$
of dimension d, hence $d=D$. The
fiber of $h_{|\tilde{U}}$ at $\tilde{x_0}$ contains a component of $N_{\tilde{x_{0}}}$ of dimension $d$. Hence $D=
d$.

For any $\tilde{x}\in \tilde{U}$ we have on the other hand
$$D=\dim_{\tilde{x}}\left(\tilde{W}\times_{H_s}\tilde{x}\right)
      =\dim_{\tilde{x}}(N_{\tilde{x}}\cap\tilde{W})
      \leq\dim_{\tilde{x}}(N_{\tilde{x}})=d = D\;\;.$$
Hence for any $x \in U$, $\dim_{\tilde{x}}(N_{\tilde{x}}\cap\tilde{W})
      =\dim_{\tilde{x}}(N_{\tilde{x}})$. Hence there exists a component $N_x^0$ of $N_{x}$ of dimension
      $d$ such that $N^0_{x}\cap U$ is open and dense in $N^0_{x}$. 
Hence $U$ is dense in $\bigcup_{x\in U}N^0_{x}$ (for the usual topology)
and $\bigcup_{x\in U}N^0_{x}\subset W$.

As this holds for any irreducible component $W$ of $A'_{\geq d}$, the
result follows.

\end{proof}

\subsection{Application to $\ZZ$VHS: proof of \Cref{A} and
corollary for Hodge loci}

Suppose now that $\VV$ is a $\ZZ$VHS and $F= F^i \cV$. Then $A_{F,
  \geq d} = \VV^i_{\geq d}$ and \Cref{Aplus} in this case is 
\Cref{A}.

\medskip
Moreover $A_{F,\geq d, \QQ} = \VV^i_{\geq d} \cap \VV_\QQ$
and \Cref{saturation} reads:

\begin{prop} \label{sat}
Let $S$ be a smooth complex quasi-projective algebraic variety and
$\VV$ be a polarized $\ZZ$VHS over $S$. Let $i \in \ZZ$ and $d \in \NN$.
There exist a Zariski-open dense subset $U$ of $\overline{\VV^i_{\geq d}
  \cap \VV_\QQ}^\Zar$ and, for each $\lambda \in U$, a component
$\VV^{i,0}(\lambda)$ of $\VV^i(\lambda)$ of dimension at least $d$
such that   
$U \subset \bigcup_{\lambda\in U} \VV^{i,0}(\lambda) \subset \VV^i_{\geq d}\;\;.$
\end{prop}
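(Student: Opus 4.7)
The plan is simply to verify that Proposition~\ref{sat} is a direct instantiation of Proposition~\ref{saturation}, so that the bulk of the work has already been carried out in the more general flat-connection setting. First I would take the algebraic flat connection to be the regular Gau\ss--Manin connection $(\cV, \nabla)$ associated to $\VV$, whose underlying complex local system is $\VV = \VV_\ZZ \otimes_{\ZZ} \CC$; since this local system is even defined over $\ZZ$, it is a fortiori defined over $\QQ$, so the standing hypothesis of Proposition~\ref{saturation} is met. Next I would choose the algebraic subvariety $F \subset \cV$ to be the Hodge sub-bundle $F^i \cV$, which is indeed algebraic by the aforementioned Griffiths--Schmid algebraicity statement cited in the introduction.

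Second, I would carefully match up the notation. By the very definition of $\VV^i(\lambda)$ as $\VV(\lambda) \cap F^i\cV$ with its reduced analytic structure, and of a component of $\VV^i(\lambda)_{\geq d}$, one checks that for a point $x$ lying over $\lambda \in V^*$ the union of irreducible components of $(\VV(x) \cap F^i)^{\red}$ passing through $x$ is precisely $N_{F^i, x}$ in the sense of Theorem~\ref{Aplus}, and that this is of dimension $\geq d$ at $x$ exactly when $x$ lies in some component of $\VV^i(\lambda)$ of dimension $\geq d$ through $x$. Hence
\[
A_{F^i, \geq d} \;=\; \VV^i_{\geq d} \qquad \text{and} \qquad A_{F^i, \geq d, \QQ} \;=\; \VV^i_{\geq d}\cap \VV_\QQ.
\]

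Finally, with these identifications in place, I would invoke Proposition~\ref{saturation} directly: it produces a Zariski-open dense subset $U$ of ${A'}_{F^i, \geq d} = \overline{\VV^i_{\geq d}\cap\VV_\QQ}^{\Zar}$ such that
\[
U \;\subset\; \bigcup_{x\in U} N_{F^i, x} \;\subset\; {A'}_{F^i,\geq d} \;\subset\; A_{F^i,\geq d} \;=\; \VV^i_{\geq d},
\]
which is precisely the conclusion of Proposition~\ref{sat}. No new technical difficulty is expected at this stage; the only point requiring care is the unravelling of definitions in the previous paragraph, since the analytic subvariety $\VV^i(\lambda)$ of the étalé space and the variety $A_{F^i,\geq d}\subset \cV$ are described from slightly different viewpoints (flat transport of a fixed class versus constancy along horizontal leaves of $\nabla$), but under the flat trivialization of $\tilde{\cV}$ the two descriptions coincide pointwise and the dimension conditions agree.
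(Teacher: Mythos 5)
Your proof is correct and follows exactly the route the paper takes: the authors themselves introduce \Cref{sat} as nothing more than \Cref{saturation} specialized to $F=F^i\cV$ with the identifications $A_{F^i,\geq d}=\VV^i_{\geq d}$ and $A_{F^i,\geq d,\QQ}=\VV^i_{\geq d}\cap\VV_\QQ$, and give no further argument. Your careful unravelling of the notation (matching $N_{F^i,x}$ to the components of $\VV^i(\lambda)$ through $x$, and using algebraicity from \Cref{Aplus} for the final inclusion ${A'}_{F^i,\geq d}\subset A_{F^i,\geq d}=\VV^i_{\geq d}$) is exactly the implicit content of the paper's one-line reduction.
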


Consider now the Zariski-closure $\overline{p(\VV_\QQ \cap \VV^0_{\geq
    d})}^\Zar$. It coincides with the projection $p(\overline{\VV^i_{\geq d}
  \cap \VV_\QQ}^\Zar)$.
For $\lambda \in F^i\cV$ the projection $p(\VV^{i,0}(\lambda))$ is a
component of dimension at least $d$ of
$S^i(p(\lambda))$. By \Cref{closure} the Zariski-closure of any such
components is a weakly special subvariety of $S$ of dimension at least
$d$. We thus obtain

\begin{cor} \label{satenbas}
Let $S$ be a smooth complex quasi-projective algebraic variety and
$\VV$ be a polarized $\ZZ$VHS over $S$. Let $d \in
\NN$.
Then $\overline{p(\VV_\QQ \cap \VV^0_{\geq d})}^\Zar$ contains a
Zariski-open dense set $U$ with the following property: for each point
$x \in U$ there exists a weakly special subvariety $Y_x \subset
\overline{p(\VV_\QQ \cap \VV^0_{\geq d})}^\Zar$ of dimension at least
$d$ passing through $x$.
\end{cor}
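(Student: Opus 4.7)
The plan is to apply \Cref{sat} with $i = 0$ (hence $F = F^0$) to produce the required dense open set upstairs in $\cV$, then to push it down through the projection $p : \cV \lo S$, invoking \Cref{converse} to control the fibers of $p$ over rational Hodge classes and \Cref{closure} to identify Zariski-closures as weakly special subvarieties. Concretely, \Cref{sat} furnishes a Zariski-dense open subset $\widetilde{U}$ of $\tilde{W}' := \overline{\VV^0_{\geq d} \cap \VV_\QQ}^\Zar \subset \cV$ such that for every $\tilde{x} \in \widetilde{U}$ the analytic variety $N_{F^0,\tilde{x}}$ (the union of irreducible components of $\VV^0(\lambda_{\tilde{x}})$ passing through $\tilde{x}$, where $\lambda_{\tilde{x}} \in V^*$ is the flat transport of $\tilde{x}$) has dimension at least $d$ and is entirely contained in $\tilde{W}'$.

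Next I would push this picture down to $S$. For $\tilde{x} \in \widetilde{U}$ one has $\lambda_{\tilde{x}} \in V_\QQ$, and after rescaling by $\QQ^*$ (which does not affect $S^0(\lambda_{\tilde{x}})$) we may assume $\lambda_{\tilde{x}} \in V_\ZZ^\nc$, separating off the harmless contribution coming from $V^\textnormal{c}\cap F^0$. By \Cref{CDK}, $\VV^0(\lambda_{\tilde{x}}) = \Hdg(\lambda_{\tilde{x}})$ is algebraic in $\cV$, and \Cref{converse} gives that $p|_{\Hdg(\lambda_{\tilde{x}})}$ is finite. Therefore $p(N_{F^0,\tilde{x}})$ is a union of components of $S^0(\lambda_{\tilde{x}}) = \HL(S,\lambda_{\tilde{x}})$, each of dimension at least $d$. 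Taking $U := p(\widetilde{U})$ and, for each $x \in U$, setting $Y_x := \overline{p(N_{F^0,\tilde{x}})}^\Zar$ for some chosen preimage $\tilde{x}\in\widetilde{U}$ of $x$, \Cref{closure} yields that $Y_x$ is a weakly special subvariety of $S$ of dimension at least $d$. Since $p(N_{F^0,\tilde{x}}) \subset S^0(\VV)_{\geq d}\cap \HL(S,\VV)$, this immediately gives $Y_x \subset \overline{S^0(\VV)_{\geq d} \cap \HL(S,\VV)}^\Zar$.

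The main obstacle is the bookkeeping needed to conclude that $U$ is Zariski-dense in the full closure $\overline{S^0(\VV)_{\geq d} \cap \HL(S,\VV)}^\Zar$ (and not merely in $\overline{p(\tilde{W}')}^\Zar$). This splits into two checks. First, one matches $\overline{p(\tilde{W}')}^\Zar$ with $\overline{S^0(\VV)_{\geq d} \cap \HL(S,\VV)}^\Zar$ by noting that $\CC^*$-scaling in the fibers of $\cV \to S$ relates $\VV_\QQ$ and $\VV_\ZZ$, and that the relevant subvarieties of $S$ arise by Zariski-closure of components of $\HL(S,\lambda) \cap S^0(\VV)_{\geq d}$ for integral non-constant $\lambda$, all of which are covered by $p(\tilde{W}')$. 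Second, one verifies that the projection of the Zariski-dense open $\widetilde{U}$ through $p$ remains Zariski-dense in $\overline{p(\tilde{W}')}^\Zar$; since $\tilde{W}'$ has finitely many irreducible components and on each component the morphism $p$ is dominant onto an irreducible subvariety of $S$, this is a standard constructibility argument. Once these two identifications are in place, the corollary follows.
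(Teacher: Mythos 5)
Your overall route matches the paper's: apply \Cref{sat} with $i=0$ to produce a dense open set upstairs in $\overline{\VV^0_{\geq d}\cap\VV_\QQ}^\Zar$, push through $p$, and use \Cref{closure} to see the Zariski-closures of the resulting components of $S^0(\lambda)$ are weakly special. But there is a genuine gap in the way you push down. You assert that for $\tilde{x}\in\widetilde{U}$ one has $\lambda_{\tilde{x}}\in V_\QQ$, and you then feed this into \Cref{CDK} and \Cref{converse} to get algebraicity and finiteness of $p$ over $\Hdg(\lambda_{\tilde{x}})$. This is false: $\widetilde{U}$ is a Zariski-open dense subset of the Zariski-\emph{closure} $\overline{\VV^0_{\geq d}\cap\VV_\QQ}^\Zar$, and as soon as that closure has an irreducible component of positive dimension, any nonempty Zariski-open subset of it is uncountable while $V_\QQ$ is countable. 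So a generic $\tilde{x}\in\widetilde{U}$ has $\lambda_{\tilde{x}}\notin V_\QQ$, and neither \Cref{CDK} nor \Cref{converse} can be applied to such $\lambda_{\tilde{x}}$.

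Fortunately this detour is unnecessary, and the paper does not take it. First, \Cref{closure} is stated for \emph{any} $\lambda\in V$, rational or not, so you can directly conclude that $\overline{p(N_{F^0,\tilde{x}})}^\Zar$ is weakly special without knowing anything about the arithmetic nature of $\lambda_{\tilde{x}}$. Second, you do not need finiteness of $p$ over $\Hdg(\lambda_{\tilde{x}})$ to control dimensions: the restriction of $p$ to any flat leaf $\VV(\lambda)=\pi(\tilde S\times\{\lambda\})$ is a covering map onto its image (it has discrete fibers), hence a local biholomorphism, so the image $p(N_{F^0,\tilde{x}})$ of the $\geq d$-dimensional set $N_{F^0,\tilde{x}}$ is again of dimension $\geq d$ at $x$. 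With these two corrections the remaining bookkeeping you describe (identifying $\overline{p(\tilde{W}')}^\Zar$ with $\overline{S^0(\VV)_{\geq d}\cap\HL(S,\VV)}^\Zar$, and shrinking $p(\widetilde{U})$ to a Zariski-open set via constructibility) is sound and the argument goes through.
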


\section{Proof of \Cref{main}} \label{final}

\begin{proof}[\unskip\nopunct]

Following Deligne (see \cite[Theor. 4.10]
{Voisin2}), there exists a bound on the tensors
one has to consider for defining $\HL(S, \VV^\otimes)$. Thus
$\HL(S,\VV^\otimes)= \bigcup_{i=1}^n \HL(S, \VV_i)$ for finitely many
irreducible weight zero $\ZZ$VHS $\VV_i \subset
\VV^\otimes$. It follows 
that $\HL(S,\VV^\otimes)_\pos = \bigcup_{i=1}^n \HL(S,
\VV_i)_\pos$. Hence, replacing $\VV$ by $\VV \oplus \bigoplus_{i=1}^n \VV_i$ if
necessary (this does not change the generic Mumford-Tate group, the
period map, or the special subvarieties), we are reduced without loss of generality  to
showing that for $\VV$ a polarizable $\ZZ$VHS the positive Hodge locus $\HL(S, \VV)_\pos$ is either a finite
union of special subvarieties of $S$ for $\VV$ or Zariski-dense in $S$.

To make the proof of \Cref{main} more transparent we deal first with special
cases.

\smallskip
\noi
{\em Case 1: the period map
$\Phi_S$ is an immersion.} In that case 
$$\HL(S, \VV)_\pos = p( (\VV_{\QQ} \cap (\VV)^0_{\geq 1})
\;\;.$$

Applying \Cref{satenbas} for $d=1$ to $\VV$
it follows that $\overline{\HL(S,\VV)_\pos}^\Zar$ contains a
Zariski-open dense subset $U$ with the following property: for each point $x
\in U$ there exists a
positive dimensional weakly special subvariety $W_x$ for $(S, \VV)$ passing
through $x$ and contained in $\overline{\HL(S,\VV)_\pos}^\Zar$. 

Either there exists $x \in U$ such that $W_x=S$, in which case
$\overline{\HL(S,\VV)_\pos}^\Zar=S$.
Or for all $x \in U$ the weakly special subvariety $W_x$ of $S$ is
strict. In this case the assumption that $\MT(S,\VV)$ is non-product
and the description of weakly special subvarieties given in
\Cref{wsvarieties} implies that each
$W_x$ is contained in a unique strict positive dimensional special 
subvariety $S_x$ of 
$S$. As $S_x$ belongs by definition to $\HL(S,\VV)_\pos$, it follows in this case that
$ \overline{\HL(S,\VV)_\pos}^\Zar =
\HL(S,\VV)_\pos$
is a finite union of strict special subvarieties of $S$, hence the
result. 

\smallskip
\noi
{\em Case 2: the period map $\Phi_S$ has constant relative dimension
  $d$.}
The proof is the same as in the first case,
replacing $(\VV_i)^0_{\geq 1 }$ and
``positive dimensional'' by $(\VV_i)^0_{\geq d }$ and ``at least
$(d+1)$-dimensional''.

\smallskip
\noi{\em General case:}
As the period map $\Phi_S$ is definable in the o-minimal structure
$\RR_{\an, \exp}$ 
(see \cite{BKT}), it follows from the trivialization theorem \cite[Theor. (1.2)
p.142]{VDD} that the locus $S_d \subset S$ where the fibers of
$\Phi_S$ are of complex dimension at least $d$ is an $\RR_{\an,
  \exp}$-definable subset of $S$. As $S_d$ is also a closed complex analytic
subset of $S$, if follows from the o-minimal Chow theorem
\cite[Theor.4.4 and Cor. 4.5]{PS} of
Peterzil-Starchenko that $S_d$ is a closed algebraic subvariety of
$S$. Finally we obtain an algebraic filtration $S=S_{d_{0}} \supsetneq
S_{d_{1}} \supsetneq \cdots \supsetneq S_{d_{k}} \supsetneq
S_{d_{k+1}}= \emptyset$.

Suppose that $\HL(S, \VV)_\pos$ is not algebraic. Let 
$i \in \{0, \cdots, k\}$ be the smallest integer such that $(S_{d_{i}}-S_{d_{i+1}})
\cap \HL(S, \VV)_\pos$ is not a closed algebraic subvariety of
$S_{d_{i}}-S_{d_{i+1}}$. 
As $\HL(S-S_{d_{i+1}}, \VV_{|S-S_{d_{i+1}}}^\otimes)_\pos = \HL(S,
\VV)_\pos \cap (S-S_{d_{i+1}})$, to prove that $\HL(S,
\VV)_\pos$ is Zariski-dense in $S$ we can and will assume without loss of generality
that $i=k$ (replacing $S$ by $S-S_{d_{i+1}}$ if necessary).

Without loss of generality we can assume that
$\HL(S,\VV)_\pos$ is contained in $S_{d_{i}}$: this is clear
if $i=0$, as $S= S_{d_{0}}$ in this case; if $i>0$ there are only finitely many maximal positive special subvarieties $Z_1,
\dots, Z_m$ of $S$ for $\VV$ intersecting
$S_{d_{i-1}}-S_{d_{i}}$ and we can without loss of generality replace $S$ by $S-(Z_1 \cup
\cdots \cup Z_m$).

Thus $\HL(S,\VV)_\pos$ coincide with $p((\VV)^0_{\geq d_{i}+1} \cap \VV_\QQ)$.
Applying \Cref{satenbas} with $d=d_i+1$, it follows that 
the union $Z$ of irreducible components of $\ol{\HL(S,\VV)_\pos}^\Zar$ contains a Zariski-open dense set
$U$ such that for every point $x \in U$ there exists a weakly special
subvariety $W_x$ of $S$ for $\VV$ of dimension at least $d_i +1$ passing
through $x$ and contained in $Z$. 

If $i >0$ the weakly special subvariety $W_x \subset Z \subset S_{d_{i}}$ is 
strict, and we conclude as above: each $W_x$ is contained in a unique
strict positive special subvariety $S_x$ for $
\VV$, thus $Z=\HL(S,\VV)_\pos$, which contradicts the assumption that $\HL(S, \VV)_\pos$ is not an algebraic subvariety of
$S$.

Thus $i=0$. Hence we are in Case 2 above and we conclude that $\HL(S, \VV)_\pos$ is
Zariski-dense in $S_{d_{0}}=S$.
This finishes the proof of \Cref{main}.
\end{proof}

\medskip
\noindent Bruno Klingler : Humboldt Universit\"at zu Berlin

\noindent email : \texttt{bruno.klingler@hu-berlin.de}

\medskip
\noindent Ania Otwinowska: Humboldt Universit\"at zu Berlin

\noindent email : \texttt{anna.otwinowska@hu-berlin.de}

\end{document}